\newtheorem{theorem}{Theorem}[section]
\newtheorem{lemma}[theorem]{Lemma}
\theoremstyle{definition}
\newtheorem{definition}[theorem]{Definition}
\newtheorem{example}[theorem]{Example}
\theoremstyle{remark}
\newtheorem{remark}[theorem]{Remark}
\numberwithin{equation}{section}
\DeclareMathOperator*{\esssup}{ess\,sup}
\def\Xint#1{\mathchoice
  {\XXint\displaystyle\textstyle{#1}}%
  {\XXint\textstyle\scriptstyle{#1}}%
  {\XXint\scriptstyle\scriptscriptstyle{#1}}%
  {\XXint\scriptscriptstyle\scriptscriptstyle{#1}}%
  \!\int}
\def\XXint#1#2#3{{\setbox0=\hbox{$#1{#2#3}{\int}$}
    \vcenter{\hbox{$#2#3$}}\kern-.5\wd0}}
\def\avgint{\Xint-}
\numberwithin{equation}{section}
\begin{document}
\title[Buckley's theorem in LCA groups ]{Improved  Buckley's theorem on LCA groups}

\author{Victoria Paternostro}
\author{ Ezequiel Rela}
\address{Departamento de Matem\'atica,
Facultad de Ciencias Exactas y Naturales,
Universidad de Buenos Aires, Ciudad Universitaria
Pabell\'on I, Buenos Aires 1428 Capital Federal Argentina} \email{vpater@dm.uba.ar, erela@dm.uba.ar}

\thanks{ Both authors are partially supported by grants UBACyT 20020130100403BA,  CONICET-PIP 11220150100355 and PICT 2014-1480.}

\subjclass{Primary: 42B25. Secondary: 43A70.}

\keywords{Locally compact abelian groups; Reverse H\"older inequality; Muckenhoupt  weights; Maximal functions}

\begin{abstract}
We present sharp quantitative weighted norm inequalities for the Hardy-Littlewood maximal function in the context of Locally Compact Abelian Groups, obtaining an improved version of the so-called Buckley's Theorem.  On the way, we 
prove a precise reverse H\"older inequality for Muckenhoupt $A_\infty$ weights and provide a valid version of the ``open property" for Muckenhoupt $A_p$ weights.
\end{abstract}

\maketitle

\section{Introduction and main results}

The study of weighted norm inequalities for maximal type operators is one of the central topics in harmonic analysis that began with the celebrated theorem of Muckenhaupt \cite{Muckenhoupt:Ap}. It    states that the class of weights (nonnegative locally integrable functions) characterizing the boundedness of the Hardy-Littlewood maximal function $M$ on the weighted space $L^p(\mathbb{R}^n,w dx)$ is the so-called Muckenhaupt $A_p$ class (see below for the precise definitions). It is important to remark that Muckenhaupt's result is qualitative, that is, it does not provide any precise information of how the operator norm of $M$ depends on the underlying weight in $w\in A_p$. The first quantitative  result on the boundeness  for the maximal function in $\mathbb{R}^n$ dates back to the 90's, is due to Buckley \cite{Buckley} and  gives the best possible power dependence on the $A_p$ constant $[w]_{A_p}$. More precisely, Buckley proved that 
\begin{equation}\label{eq:Buckley}
\|M\|_{L^p(\mathbb{R}^n,w dx)\to L^p(\mathbb{R}^n,w dx)}\le C [w]^{\frac{1}{p-1}}_{A_p}, \qquad 1<p<\infty.
\end{equation}
Recently a simpler and elegant proof was presented by Lerner \cite{Lerner-Elementary} who used  a very clever argument composing weighted versions of the maximal function. Later, finer improvements have been found. In particular, there is in \cite{HPR1} a sharp mixed bound valid in the context of spaces of homogeneous type.

Our purpose here is to obtain sharp quantitative norm estimates in the context of Locally Compact Abelian groups (LCA groups).
The modern approach to this problem is to use a sharp version of the reverse H\"older inequality (RHI) with a precise quantitative expression for the exponent to derive a proper open property for the $A_p$ claseses. Then an interpolation type argument allows to prove the desired bound.

In the rest of the introduction we first described in details the context where we will work in and then properly state the results that we will prove. 
 
\subsection{Muckenhoupt  weights and maximal function on LCA groups}

In the euclidean setting the standard way to introduce $A_p$ weights is by considering averages over cubes, balls or more general families of convex sets. In any case, the family is built using some specific metric. In our context of LCA groups  we lack of such concept. However there are many LCA groups where  we do have the possibility of consider a family of \emph{base sets} satisfying the other fundamental property of the basis of cubes or balls: any point has a family of decreasing base sets shrinking to it and, in addition, the whole space can be covered by the increasing union of such family. 

In order to properly defined the $A_p$ classes let us fix an LCA group $G$ with a measure $\mu$ that is inner regular and such that $\mu(K)<\infty$ for every compact set $K\subset G$.  Notice that $\mu$ does need to be the Haar measure because we do not assume $\mu$ to be translation invariant. The reader can find a comprehensive treatment of Harmonic Analysis on LCA groups in \cite{HR-vol2, HR-vol1, rudinLCA}. 
The general assumption on the group will be that it admits a sequence of neighborhoods of $0$ with certain properties that we described in the next definition (cf.  \cite[Section 2.1] {EG77}). 

\begin{definition}
A collection $\{U_i\}_{i\in\mathbb{Z}}$ is a covering family for $G$ if 
\begin{enumerate}
\item $\{U_i\}_{i\in\mathbb{Z}}$  is an increasing base of relatively compact neighborhoods of $0$,	$\bigcup_{i\in\mathbb{Z}}U_i=G$ and $\bigcap_iU_i=\{0\}$.
\item There exists a positive constant $D\ge 1$ and an increasing function $\theta:\mathbb{Z}\to\mathbb{Z}$ 
such that for any $i\in \mathbb{Z}$ and any $x\in G$
\begin{itemize}
\item $i\le\theta(i)$
\item $U_i - U_i\subset U_{\theta(i)}$
\item $\mu(x+ U_{\theta(i)})\le D\mu (x+U_i)$.
\end{itemize}
\end{enumerate}
We will refer to the third condition as the \emph{doubling} property of the measure $\mu$ with respect to $\theta$ and we will call $D$ the doubling constant.
In the case of $\mathbb{R}^n$ equipped with the natural metric and measure, we can consider the family of dyadic cubes of sidelength $2^i$ or the euclidean balls $B(x,2^i)$ for $i\in \mathbb{Z}$. The doubling constant of the Lebesgue measure in this context is $2^n$ and the function $\theta$ can be taken to be $\theta(i)=i+1$. Therefore, the intuition here is that the index $i$ in the above definition can be seen as a sort of \emph{radius} or \emph{size} of the given set $U_i$. 

For each $x\in G$, the set $x+U_i$ will be called \emph{base set} and the collection of all base sets will be denoted by 
\begin{equation}\label{eq:basesets}
\mathcal{B}:=\left\{x+U_i : x\in G,  i\in \mathbb{Z}\right\}.
\end{equation}
\end{definition}

The notion of base sets allows to define a direct analogue of the Hardy Littlewood maximal function:
\begin{equation}\label{eq:Maximal}
Mf(x) = \sup_{x\in U\in\mathcal{B} } \avgint_{U} |f|\ d\mu :=\sup_{x\in U\in\mathcal{B} } \frac1{\mu(U)} \int_{U} |f|\ d\mu. 
\end{equation}

As we already mentioned, our purpose here is to prove sharp weighted norm inequalities for this operator in $L^p(G,w d\mu)$, where $w$ is a weight on $G$ . Firstly, recall that the celebrated Muckenhoupt's Theorem asserts that the class of weights characterizing the boundedness of $M$ on $L^p(\mathbb{R}^n,w dx)$, $p>1$ is the Muckenhoupt $A_p$ class defined in $\mathbb{R}^n$ by

\begin{equation}\label{eq:A_p-Rn}
[w]_{A_p(\mathbb{R}^n,dx)}:=\sup_{Q} \left(\avgint_Q w\,d\mu\right) \left(\avgint_Q w^{1-p'}\,d\mu\right)^{p-1} <\infty.
\end{equation}

Here $p'$ denotes the conjugate exponent of $p$ defined by the condition $\frac{1}{p}+\frac{1}{p'}=1$. 
In the case of LCA groups the analogue of \eqref{eq:A_p-Rn} is obtained by replacing 
the cubes by base sets. More precisely, 
a weight $w$ is an $A_p=A_p(G,d\mu)$ weight if
\begin{equation}\label{eq:A_p}
[w]_{A_p}:=\sup_{U\in\mathcal{B}} \left(\avgint_U w\,d\mu\right) \left(\avgint_U w^{1-p'}\,d\mu\right)^{p-1} <\infty.
\end{equation}

The limiting case of \eqref{eq:A_p}, when $p=1$, defines the class $A_1$; that is, the set of weights $w$ such that
\begin{equation*}
[w]_{ A_1 }:=\sup_{U\in \mathcal B}\bigg(\avgint_U w\,d\mu \bigg) \esssup_{U} (w^{-1})<+\infty,
\end{equation*}
which is equivalent to $w$ having the property
\begin{equation*}
 Mw(x)\le [w]_{A_1}w(x)\qquad \mu\text{-a.e. } x \in G.
\end{equation*}
As in the usual setting of $\mathbb{R}^n$ we will also often refer to $\sigma:=w^{1-p'}$ as the \emph{dual weight} for $w$. It is easy to verify that $w\in A_p$ if and only if $\sigma\in A_{p'}$. 

The family of $A_p$ classes is increasing and this motivates the definition of the larger class $A_\infty$ as the union $A_\infty=\bigcup_{p\ge 1} A_p$. 
There are many characterizations of the class $A_\infty$  (see \cite{DMRO-Ainfty} or the more classical reference \cite{gra04}). Some of them are given in terms of the finiteness of some $A_\infty$ constant suitably defined. The classical definition consists in taking the limit on the $A_p$ constant as $p$ goes to infinity, namely:
\begin{equation}\label{eq:Ainfty-exp}
(w)_{A_\infty}:=\sup_{U\in\mathcal{B}}\left (\avgint w d\mu \right )\exp{\avgint_U \log(w^{-1}) d\mu}.
\end{equation}
However, the modern tendency is to consider the so-called Fujii-Wilson constant 
implicitly introduced by Fujii in \cite{Fujii}, and later rediscovered by Wilson, \cite{Wilson:87,Wilson-LNM}, and here we choose to follow this approach by defining the $A_\infty$ constant as
\begin{equation}\label{eq:Ainfty-definition}
[w]_{A_\infty}:=\sup_{U\in \mathcal B}\frac{1}{w(U)}\int_U M(w\chi_U)\ d\mu,
\end{equation} 
where $w(U)=\int_Uw\ d\mu$.

\subsection{Our contribution}\label{subsec:results}
As we have already seen, there is a proper -and natural- way to define the $A_p$ and $A_\infty$  classes on an LCA groups having covering families. In contrast with the case $p<\infty$,  it is not immediate that the weight $w$ belongs to $A_\infty$ when any of constants defined on \eqref{eq:Ainfty-exp} and \eqref{eq:Ainfty-definition} is finite. In fact, a weight $w$ is in $A_\infty$ (that is, in some $A_p$) if and only if it satisfies the reverse H\"older inequality which says that 
\begin{equation*}
  \left(\avgint_U w^{r}\ d\mu\right)^{1/r}\leq C \avgint_{\widehat{U}} w\ d\mu
\end{equation*}
for some $r>1$ and where $\widehat{U}$ is an open set defined in terms of $U$ (in the euclidean case $\widehat{U}=U$ and in the case of  spaces of homogeneous type, it is a dilation of $U$).  This is a very well known result in the qualitative case, but it was proved recently in \cite{HPR1} a sharp quantitative result in terms of $[w]_{A_\infty}$ in the context of spaces of homogeneous type. 

Our first result is the following version of the RHI. Note that, as in \cite{HPR1}, we are able to precisely describe the exponent $r$ in term of the constant $[w]_{A_\infty}$.

\begin{theorem}[Sharp  weak reverse H\"older inequality]\label{thm:SharpRHI}

Let $w\in A_\infty$. Define the exponent $r(w)$ as 
\begin{equation*}
r(w)=1+\frac{1}{4D^{10}[w]_{A_\infty}-1},
\end{equation*}
where $D$ is the doubling constant. 
Then, for a fixed $U=x_0+U_{i_0}\in\mathcal{B}$, we have that the following inequality holds
\begin{equation}\label{eq:RHI}
  \left(\avgint_U w^{r(w)}\ d\mu\right)^{1/r(w)}\leq 2D^2 \avgint_{\widehat{U}} w\ d\mu,
\end{equation}
where  $\widehat{U}$ is the  union of the base sets  $\{x+U_i:\, x\in U,\,\,i\leq i_0\}$.
\end{theorem}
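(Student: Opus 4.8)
The plan is to transplant the modern proof of the sharp reverse H\"older inequality from \cite{HPR1} to the base-set structure, replacing the dyadic/metric Calder\'on--Zygmund machinery by a Vitali-type selection that uses only the covering-family axioms $U_i-U_i\subseteq U_{\theta(i)}$ and the $\theta$-doubling of $\mu$. Fix $U=x_0+U_{i_0}$ and introduce the localized maximal operator
\[
M_U f(x):=\sup\Big\{\avgint_{x'+U_i}|f|\,d\mu\ :\ x\in x'+U_i,\ x'\in U,\ i\le i_0\Big\},\qquad x\in U .
\]
One checks at once that $\widehat{U}=\bigcup\{x'+U_i:x'\in U,\ i\le i_0\}$ is exactly the region that $M_U$ ``sees'', so $M_U f=M_U(f\chi_{\widehat{U}})$ on $U$; moreover $U\subseteq\widehat{U}\subseteq x_0+U_{\theta(i_0)}$, hence $\mu(\widehat{U})\le D\mu(U)$, and $w\le M_U(w\chi_{\widehat{U}})$ $\mu$-a.e.\ on $U$ by the Lebesgue differentiation theorem (valid in this doubling setting).

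The second step is a local Calder\'on--Zygmund decomposition. Put $a:=\avgint_{\widehat{U}}w\,d\mu$; the doubling estimate gives $\avgint_{x'+U_{i_0}}w\,d\mu\le D^2a$ for every $x'\in U$, so for every height $\lambda\ge D^2a$ any base set realizing $M_U(w\chi_{\widehat{U}})>\lambda$ has size strictly below $i_0$. For each $x\in\Omega_\lambda:=\{x\in U:M_U(w\chi_{\widehat{U}})(x)>\lambda\}$ pick a base set $P_x=x'_x+U_{m_x}\ni x$, with $x'_x\in U$, of maximal size among those realizing the inequality; by maximality the concentric base set one index larger has $w$-average at most $\lambda$, so the $\theta$-doubling of $\mu$ forces $\lambda<\avgint_{P_x}w\,d\mu\le D\lambda$, and $P_x\subseteq\widehat{U}$. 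A greedy selection by decreasing size, combined with $U_i-U_i\subseteq U_{\theta(i)}$ and the $\theta$-doubling of $\mu$, extracts from $\{P_x\}_{x\in\Omega_\lambda}$ a pairwise disjoint subfamily $\{P_j\}$ whose enlargements $\widetilde P_j:=x_j+U_{\theta(\theta(m_j))}$ satisfy $\mu(\widetilde P_j)\le D^2\mu(P_j)$ and cover $\Omega_\lambda$. (The mild failure of nestedness of these families across consecutive heights is absorbed by further controlled enlargements, exactly as in \cite{HPR1}.)

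The third step is where $A_\infty$ enters: applied on any base set $P$ the Fujii--Wilson condition reads $\int_P M(w\chi_P)\,d\mu\le[w]_{A_\infty}w(P)$, and combining this via Chebyshev with the Calder\'on--Zygmund base sets at two consecutive heights $\lambda$ and $\tau\lambda$ produces a recursive estimate of the form $w(\Omega_{\tau\lambda})\le\theta\,w(\Omega_\lambda)$, in which $\tau$, the contraction factor $\theta<1$, and $[w]_{A_\infty}$ are linked quantitatively. Iterating this inequality yields power-type decay of the distribution function of $w$ over $U$, and \eqref{eq:RHI} then follows by inserting this decay into the layer-cake identity
\[
\int_U w^{r}\,d\mu=(r-1)\int_0^\infty \lambda^{r-2}\,w\big(\{x\in U:w(x)>\lambda\}\big)\,d\lambda ,
\]
splitting at $\lambda_0=D^2a$ (the contribution of $\lambda\le\lambda_0$ is at most $\lambda_0^{r-1}w(U)\le D^{2r-1}a^r\mu(U)$, a fixed fraction of the target), and choosing $r=r(w)$ so that the geometric series coming from the iteration converges with a controlled sum. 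Optimizing the constants and carefully tracking every application of $\theta$ and of the doubling property through the covering arguments is what produces both the exponent $r(w)=1+(4D^{10}[w]_{A_\infty}-1)^{-1}$ and the constant $2D^2$. The main obstacle is the second step: with no metric and no ready-made dyadic grid, the Calder\'on--Zygmund/Vitali selection must be run purely from $U_i-U_i\subseteq U_{\theta(i)}$ and the $\theta$-doubling of $\mu$, and one must keep the enlargements $\widetilde P_j$ of measure comparable to $P_j$ while controlling the accumulated power of $D$, since that power is precisely what enters the sharp exponent.
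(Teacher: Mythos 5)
Your Steps 1--2 are broadly in the spirit of the paper's construction (the local base $\mathcal{B}_U$, the local maximal operator, and a C--Z selection as in Lemma \ref{lem:CZ}), but the quantitative engine in your Step 3 is a genuine gap, and it is precisely where the theorem lives. The paper never iterates a good-$\lambda$ contraction: it proves the single-height estimate $M_Uw(\Omega_\lambda)\le 2D^{10}[w]_{A_\infty}\lambda\,\mu(\Omega_\lambda)$ (via Lemma \ref{lem:CZ}, the localization Lemma \ref{lem:local-max}, and the Fujii--Wilson condition \eqref{eq:Ainfty-definition}), inserts it into the layer-cake representations of $\int_{\widehat U}(M_Uw)^{1+\varepsilon}d\mu$ and $\int_{\widehat U}(M_Uw)^{\varepsilon}w\,d\mu$, and then \emph{absorbs} the resulting term $2D^{10}[w]_{A_\infty}\tfrac{\varepsilon}{1+\varepsilon}\int_{\widehat U}(M_Uw)^{1+\varepsilon}d\mu$ into the left-hand side; the choice $\varepsilon=\frac{1}{4D^{10}[w]_{A_\infty}-1}$ is exactly what makes the absorption work, and $2D^2$ comes from $\mu(\widehat U)\le D^2\mu(U)$ together with the factor $2$ lost in the absorption (this is \eqref{eq:Maximal-vs-w} plus Step 2 of the paper's proof). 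Your substitute --- ``Fujii--Wilson plus Chebyshev at heights $\lambda$ and $\tau\lambda$ gives $w(\Omega_{\tau\lambda})\le\theta\,w(\Omega_\lambda)$, iterate and sum'' --- is not established and cannot work as stated. Chebyshev applied to \eqref{eq:Ainfty-definition} on a C--Z set gives only a $\mu$-measure bound, $\mu(V_i\cap\Omega_{\tau\lambda})\le C D^{a}[w]_{A_\infty}\tau^{-1}\mu(V_i)$, so a contraction factor $\theta\le 1/2$ forces $\tau\simeq D^a[w]_{A_\infty}$; with a jump of that size the layer-cake series you must sum behaves like $\sum_k\tau^{k\varepsilon}\theta^{k}$ with $\theta\tau\ge 1$ after converting back to $w$-measure via $w(\Omega_\lambda)\simeq\lambda\mu(\Omega_\lambda)$, and it diverges for every $\varepsilon>0$. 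On the other hand, if a $w$-measure contraction $w(\Omega_{\tau\lambda})\le\tfrac12 w(\Omega_\lambda)$ with $\tau\simeq[w]_{A_\infty}$ were available, your computation would yield an exponent of size $1/\log[w]_{A_\infty}$, which is strictly stronger than the theorem and is false even in the dyadic Euclidean case (the $1/[w]_{A_\infty}$ scale is sharp). The variant that would be consistent --- fixed structural $\tau$ with $\theta=1-c/(D^a[w]_{A_\infty})$ --- is not a Chebyshev consequence of \eqref{eq:Ainfty-definition} and is nowhere proved in your sketch; consequently the claim that ``optimizing the constants'' produces $r(w)$ and $2D^2$ has no support.

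Two further points you dismiss as mild are in fact the technical content of this setting. First, exhaustion of $\Omega_\lambda$ by the greedy selection is not automatic without a metric: the paper needs the compactness/infinite-measure argument inside Lemma \ref{lem:CZ} to show the process covers $\Omega_\lambda$. Second, the ``failure of nestedness across heights'' is exactly Lemma \ref{lem:local-max}, which costs the level $L=D^6$ and the $4\ast$-dilate $V_i^{4*}$ and takes a full page; it cannot be absorbed by ``further controlled enlargements'' without redoing that argument, and the powers of $D$ it produces are part of the exponent you are trying to obtain. Finally, your stopping-time bound $\avgint_{P_x}w\,d\mu\le D\lambda$ ``by doubling one index up'' is not licensed by the axioms: doubling holds only along $\theta$, and $\theta(i)=i$ is permitted (e.g.\ when $U_i$ is a subgroup), so consecutive indices need not have comparable measures; the paper instead compares averages at the same center over larger admissible indices (item $(d)$ of Lemma \ref{lem:CZ}), obtaining $D^2\lambda$.
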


Once we have proven such RHI, we are able to provide a quantitative open property for $A_p$ classes. It is very well known that the $A_p$ classes are open in the sense that if $w\in A_p$ for some $p>1$, then $w$ also belongs to some $A_{p-\varepsilon}$ for some $\varepsilon>0$. But the best possible $\varepsilon$ in this property is not completely characterized. Another related interesting and still open question (even in the euclidean setting) is to determine, given a weight $w\in A_\infty$, the smallest $p>1$ such that $w\in A_p$. There are some estimates in \cite{HagPar-Embeddings} but there is no proof of its sharpness.

Here we will deduce from Theorem \ref{thm:SharpRHI} an open property for $A_p$ classes in LCA groups with some control on the constants. More precisely, given $w\in A_p$ for $1<p<\infty$ we will obtain that $w\in A_{p-\varepsilon}$ for $\varepsilon=\frac{p-1}{C[\sigma]_{A_\infty}}$ with $C=4D^{10}$. Further, $[w]_{A_{p-\varepsilon}}\le 2^{p-1}D^{4p-2}[w]_{A_p}$
(se  Lemma \ref{lem:OpenProperty}).

In a recent article \cite{Sauer}  Sauer  proved a weighted bound for the maximal function for LCA groups following Lerner's approach. Additionally, it is asked there if is it possible to obtain the sharp result from Buckley in this general setting. In our main theorem we answer this question by the affirmative and moreover, we provide a better mixed bound. By a mixed bound we understand a bound that depends on $[w]_{A_p}$ and $[w]_{A_\infty}$ of the form $\varphi([w]_{A_p}[w]_{A_\infty})$
where $\varphi$ is some nonegative function, typically a power function. Since we always have that $[w]_{A_\infty}\le [w]_{A_p}$, usually mixed type bounds are sharper than estimates involving only the $A_p$ constant. 

A result in this direction was obtained in  \cite{HPR1} where the authors proved an 
 improvement of Buckley's result \eqref{eq:Buckley} in terms of mixed bounds for spaces of homogeneous type, namely 
\begin{equation*}
\|M\|_{L^p_w\to L^p_w}\le C\left ([w]_{A_p}[\sigma]_{A_\infty}\right )^{\frac{1}{p}}\le C [w]_{A_p}^{\frac{1}{p-1}}.
\end{equation*}

Our main result provides  an extension of the above  estimate to the context  of LCA groups and we will obtain it as a consequence of the RHI and the open property.
We remark here that the lack of geometry in this setting constitutes a major obstacle to overcome.

\begin{theorem}\label{thm:CorSharpBuckley}   Let $M$ be the Hardy-Littlewood maximal function defined in \eqref{eq:Maximal} and let  $1<p<\infty$. Then there is a structural constant $C>0$ such that 
\begin{equation}\label{eq:improvedBuckley}
\|Mf\|_{L_w^p(G)} \le C \left ([w]_{A_p}[\sigma]_{A_\infty}\right )^\frac{1}{p}.
\end{equation}
In particular, 
\begin{equation}\label{eq:Buckley2}
\|M\|_{L^p(w)} \leq C [w]_{A_p}^{\frac{1}{p-1}}.
\end{equation}
\end{theorem}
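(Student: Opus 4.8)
The plan is to follow the now-standard route of Hytönen--Pérez--Rela, adapting each ingredient to the LCA setting. The key intermediate object will be the weighted maximal operator $M_\sigma^c f := \sup_{x\in U\in\mathcal B}\frac{1}{\sigma(U)}\int_U |f|\sigma\,d\mu$ where $\sigma=w^{1-p'}$ is the dual weight; the point is that such weighted maximal operators are bounded on $L^s(\sigma)$ for $s>1$ with a controlled norm, and moreover that a \emph{quantitative} version holds with the norm blowing up at the precise rate $s'$ as $s\downarrow 1$. In fact one expects $\|M_\sigma^c\|_{L^s(\sigma)\to L^s(\sigma)}\le C(D)\, s'$, exactly as in the Euclidean and homogeneous-type cases; this is proved via a Calderón--Zygmund / stopping-time decomposition using the covering family $\{U_i\}$, where the doubling property with respect to $\theta$ gives the required engulfing of base sets at the cost of powers of $D$.

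The proof of \eqref{eq:improvedBuckley} then proceeds as follows. First I would reduce, by a by-now routine duality and linearization argument, to estimating $\int_G (Mf)^p w\,d\mu$ against $\int_G f^p w\,d\mu$ for $f\ge 0$; expanding the average defining $M$ over a base set $U$ and inserting $\sigma$, one writes $\avgint_U f\,d\mu = \frac{\sigma(U)}{\mu(U)}\cdot\frac{1}{\sigma(U)}\int_U f\sigma^{-1}\,\sigma\,d\mu$, so that $Mf$ is controlled pointwise by $\big(\sup_U \tfrac{\sigma(U)}{\mu(U)}\avgint_U w\,d\mu\big)^{1/p'}$-type factors times $(M_\sigma^c(f\sigma^{-1}))$ — here the interplay of $[w]_{A_p}$ enters to absorb the $\frac{w(U)}{\mu(U)}\big(\frac{\sigma(U)}{\mu(U)}\big)^{p-1}$ quantities. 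Next, apply the open property: by Lemma~\ref{lem:OpenProperty} (deduced from Theorem~\ref{thm:SharpRHI}), $\sigma\in A_{p'-\varepsilon'}$ with $\varepsilon'$ of size $\sim\frac{p'-1}{D^{10}[\sigma]_{A_\infty}}$, equivalently $w\in A_{p-\varepsilon}$; this is exactly what allows one to run the $L^s(\sigma)$ bound for the weighted maximal function with $s=p'/( p'-\varepsilon')$ so that $s'\sim p'\cdot\frac{[\sigma]_{A_\infty}D^{10}}{1}$, i.e. $s'\lesssim D^{10}[\sigma]_{A_\infty}\,p'$. Plugging $\|M_\sigma^c\|_{L^s(\sigma)\to L^s(\sigma)}\lesssim s'$ into the chain of inequalities and bookkeeping the exponents yields
\[
\|Mf\|_{L^p_w} \le C\big([w]_{A_p}\big)^{1/p}\big([\sigma]_{A_\infty}\big)^{1/p}\|f\|_{L^p_w},
\]
where all the lost powers of $D$ are absorbed into the structural constant $C$. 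The second assertion \eqref{eq:Buckley2} is then immediate from $[\sigma]_{A_\infty}\le [\sigma]_{A_{p'}}=[w]_{A_p}^{p'-1}=[w]_{A_p}^{1/(p-1)}$, giving the exponent $\tfrac{1}{p}+\tfrac{1}{p(p-1)}=\tfrac{1}{p-1}$.

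The main obstacle, as the authors themselves flag, is the absence of geometry: in $\mathbb R^n$ or in spaces of homogeneous type one freely uses that a dilate of a ball is a ball, Vitali-type covering lemmas, and the fact that $\widehat U = U$ (or a fixed dilate). Here $\widehat U$ is the genuinely larger set $\bigcup\{x+U_i: x\in U,\ i\le i_0\}$ appearing in Theorem~\ref{thm:SharpRHI}, and one must check that every place where the classical proof invokes "the same cube" or "a controlled dilate" can be replaced by passing from $U$ to $\widehat U$ (or iterating $\theta$ a bounded number of times) while only losing factors of $D$ — crucially, a number of applications of $\theta$ that does \emph{not} depend on the weight. Concretely, the delicate points are: (i) establishing the Calderón--Zygmund decomposition adapted to $\mathcal B$ with the stopping base sets having bounded overlap after one application of $\theta$, so that the weighted maximal function is weak-$(1,1)$ with respect to $\sigma\,d\mu$ with constant $C(D)$; and (ii) verifying that the $A_p$ testing conditions $\avgint_U w\,(\avgint_U\sigma)^{p-1}\le [w]_{A_p}$ survive when one of the averages is taken over $\widehat U$ instead of $U$ — this again costs only a power of $D$ by doubling. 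Once these geometric substitutions are in place, the exponent arithmetic is identical to \cite{HPR1}.
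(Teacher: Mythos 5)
Your high-level architecture (sharp RHI for $\sigma$, hence a quantitative open property, hence an interpolation-type bookkeeping, and finally $[\sigma]_{A_\infty}\le[\sigma]_{A_{p'}}=[w]_{A_p}^{p'-1}$ to pass from \eqref{eq:improvedBuckley} to \eqref{eq:Buckley2}) is consistent with the paper, but the analytic engine you rely on is not justified, and it is exactly where the absence of geometry bites. You take as an ingredient that the weighted maximal operator $M^c_\sigma$ over the basis $\mathcal B$ satisfies $\|M^c_\sigma\|_{L^s(\sigma)\to L^s(\sigma)}\le C(D)\,s'$ with a \emph{structural} constant. In $\mathbb R^n$ this universal bound comes from the Besicovitch covering theorem or from dyadic lattices, and in spaces of homogeneous type from adjacent dyadic systems; none of these tools are available here, and the paper constructs none of them. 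The only covering machinery in this setting (engulfing, Lemma \ref{lem:CZ}) is relative to $\mu$: a Vitali-type selection covers a level set by enlargements $V_k^{**}$ of disjoint base sets, and to come back to the $V_k$ in a weak $(1,1)$ bound for $M_\sigma$ with respect to $\sigma\,d\mu$ you need $\sigma(V_k^{**})\le C\,\sigma(V_k)$, i.e.\ doubling of $\sigma\,d\mu$ on base sets. For $\sigma\in A_{p'}$ that doubling constant is controlled only through $[\sigma]_{A_{p'}}=[w]_{A_p}^{p'-1}$ (or $[\sigma]_{A_\infty}$), not through $D$ alone, so your key lemma comes out weight-contaminated and the final exponents are ruined. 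This is precisely why the paper never introduces $M_\sigma$: its Lemma \ref{lem:weak} proves only the weak $(q,q)$ bound $\lambda^q w(\{Mf>\lambda\})\le D^{2q}[w]_{A_q}\|f\|^q_{L^q_w}$, where the enlargement cost is paid through $\mu$-doubling and the $A_q$ condition tested on $V_k^{**}$ (both structural in the right sense), and then runs the truncation argument $f_t=f\chi_{\{|f|>t\}}$, $\{Mf>2t\}\subset\{Mf_t>t\}$, with $q=p-\varepsilon$ and $\varepsilon=\frac{p-1}{4D^{10}[\sigma]_{A_\infty}}$ from Lemma \ref{lem:OpenProperty}; the factor $[\sigma]_{A_\infty}^{1/p}$ appears directly as $\varepsilon^{-1/p}$.

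There is a second, independent gap: even granting $\|M^c_\sigma\|_{L^s(\sigma)}\lesssim s'$, your chain of inequalities is not closed. The pointwise control you assert (``$Mf$ controlled by a $\big(\sup_U\frac{\sigma(U)}{\mu(U)}\avgint_U w\,d\mu\big)^{1/p'}$-type factor times $M^c_\sigma(f\sigma^{-1})$'') is not a correct inequality as stated, and the honest Lerner-style composition that it suggests uses $\frac{\sigma(U)}{\mu(U)}\le[w]_{A_p}^{1/(p-1)}\big(\frac{\mu(U)}{w(U)}\big)^{1/(p-1)}$ and delivers Buckley's exponent $[w]_{A_p}^{1/(p-1)}$, not the mixed bound; no choice of $s$ converts an operator norm of size $s'\sim p'D^{10}[\sigma]_{A_\infty}$, entering to the first power, into the required $[\sigma]_{A_\infty}^{1/p}$ unless you produce an estimate in which that norm already enters raised to the power $1/p$ --- the step you label ``bookkeeping'' is exactly the missing argument (in the literature it is supplied either by a Calder\'on--Zygmund-plus-RHI argument or by the weak-type-plus-open-property argument the paper uses; note the paper cites Sauer's Lerner-style adaptation to LCA groups precisely as the approach that had not reached the sharp exponent). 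In short: your item (i) is unproved and its constant is weight-dependent, your main chain is asserted rather than verified, and only the final deduction of \eqref{eq:Buckley2} from \eqref{eq:improvedBuckley} is complete and coincides with the paper's.
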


\subsection{Outline}
The paper is organized as follows. In Section \ref{sec:prelim} we give some preliminary results. We prove the engulfing property in this context that will be used several times along the paper. We also define the local maximal function, prove a crucial covering lemma (Lemma \ref{lem:CZ}) and show a localization property of the local maximal function. In Section \ref{sec:proofs} we give the proofs of the results described in Section \ref{subsec:results}

\section{Preliminaries}\label{sec:prelim}
In this section we provide some properties of  covering families that we will use. Furthermore, we will introduce a local maximal function which  will be crucial to prove the RHI.

As we already mentioned in the introduction the family of dyadic cubes of sidelength $2^i$ or the euclidean balls $B(x,2^i)$ for $i\in \mathbb{Z}$ are covering families for $G=\mathbb{R^d}$. Other examples are presented below.

\begin{example}\noindent

(1) When $G=\mathbb{T}=\{e^{2\pi i t}:\, t\in[-\frac1{2},\frac1{2}) \}$ with the Haar measure  consider $U_k\subseteq G$ defined as $U_0=\mathbb{T}$  and for $k\in\mathbb{N}$, $U_k=\{0\}$ and $U_{-k}=\{e^{2\pi i t}:\, |t|<\frac1{2^{k+1}} \}$. Then, $\{U_k\}_{k\in\mathbb{Z}}$ is a covering family for $\mathbb{T}$ with $\theta(k)=k+1$ and $D=2$.

(2) For $G=\mathbb{Z}$ take $U_i=\{k\in\mathbb{Z}:\, |k|\leq 2^{i-1}\}$ for $i\geq 1$ and $U_i=\{0\}$ otherwise. Then $\{U_i\}_{i\in\mathbb{Z}}$ is a covering family for $\mathbb{Z}$  with $\theta(i)=i+1$ and $D=2$.

 (3) Let $G$ be an LCA group with Haar measure $\mu$ and let $H$ be  a compact and open subgroup of $G$ with  $\mu(H)=1$. Consider an expansive automorphism  $A:G\to G$ with respect to $H$, which means that $H\subsetneq AH$ and $\bigcap_{i<0}A^iH=\{0\}$. If additionally, $G=\bigcup_{i\in\mathbb{Z}}A^iH$, then $\{A^iH\}_{i\in\mathbb{Z}}$ is a covering family for $G$. Indeed. Since $H\subsetneq AH$ and $H$ is a group, $A^iH-A^iH=A^iH\subseteq A^{i+1}H$ so $\theta(i)=i+1$. To see that the doubling property is satisfied, note that $\mu_A$ defined as $\mu_A(B):=\mu(AB)$ for $B\subseteq G$ a Borel set, is a Haar measure on $G$. Thus, there is a positive number $\alpha$ such that $\mu_A=\alpha\mu$. The constant $\alpha$ is the so-called {\it modulus of $A$} and is denoted by $\alpha=|A|$. Then, $\mu(A^{i+1}H)=\mu_A(A^{i}H)=|A|\mu(A^{i}H)$ for $i\in\mathbb{Z}$. Observe that  $G/H$ is discrete and $AH/H$ is finite, so $AH$ is the union of finitely many  cosets of the quotient $G/H$, say $\{H+s_j\}_{j=1}^r$. Therefore, $|A|=|A|\mu(H)=\mu(AH)=r$ and since $H\subsetneq AH$, $r\geq2$. Thus we can take $D=|A|\geq 2$. A structure  of this type is considered in \cite{BB04} for constructing wavelets on LCA groups with open and compact subgroups.\\
For a concrete example of this situation, consider the $p$-adic group  $G=\mathbb{Q}_p$
where $p\geq 2$ is a prime number consisting  of all formal Laurent series in $p$ with coefficients $\{0, 1, . . . , p-1\}$, that is, 
$$\mathbb{Q}_p=\left\{\sum_{n\geq n_0} a_np^n:\, n_0\in\mathbb{Z},\,\, a_n\in \{0, 1, . . . , p-1\}\right\}.$$
As a  compact and open subgroup we can consider $H=\mathbb{Z}_p$ which is 
$$\mathbb{Z}_p=\left\{\sum_{n\geq 0} a_np^n:\,  a_n\in \{0, 1, . . . , p-1\}\right\}.$$
Take $A:\mathbb{Q}_p\to \mathbb{Q}_p$ the automorphism defined as $A(x)=p^{-1}x$. Then, $A$ is expansive with respect to $\mathbb{Z}_p$ and it can be easily checked that $\mathbb{Q}_p=\bigcup_{i\in\mathbb{Z}}A^i\mathbb{Z}_p$. Then, $\{A^i\mathbb{Z}_p\}_{i\in\mathbb{Z}}$ is a covering family for $\mathbb{Q}_p$ and in  this case, $D=|A|=p$.
\end{example}

Let $\{U_i\}_{i\in\mathbb{Z}}$ be a fixed covering family for $G$.
From now on, we assume the sets $U_i$ to be  symmetric. This is not a restriction at all because one can always consider the new family of base sets formed by the difference sets $U_i-U_i$ which  increases  the doubling constant from $D$ to $D^2$. We denote $2U_i:=U_i-U_i=U_i+U_i$. 

  Any covering family has the so-called \emph{engulfing} property:
\begin{lemma}\label{lem:engulfing-property}
Let $U,V$ be two base sets such that $U=x+U_i$ and $V=y+U_j$ with $i\le j$ and $x,y\in G$. If $U\cap V\neq\emptyset$, then $x+U_i\subset y+U_{\theta^2(j)}$.
\end{lemma}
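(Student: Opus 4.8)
The plan is to reduce the engulfing statement for a general pair of overlapping base sets to the two axioms of a covering family: the "difference" containment $U_i - U_i \subset U_{\theta(i)}$ and the monotonicity of the family together with that of $\theta$. Fix $U = x + U_i$ and $V = y + U_j$ with $i \le j$ and pick a point $z \in U \cap V$. Writing $z = x + u$ with $u \in U_i$ and $z = y + v$ with $v \in U_j$, we get $x - y = v - u \in U_j - U_i$. The first thing I would do is absorb $U_i$ into $U_j$: since the family is increasing and $i \le j$, we have $U_i \subset U_j$, hence $U_j - U_i \subset U_j - U_j = 2U_j \subset U_{\theta(j)}$ by the difference axiom (recall the $U_k$ are now assumed symmetric, so $U_j - U_j = U_j + U_j$). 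Thus $x - y \in U_{\theta(j)}$.

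Next I would propagate the containment from the single point to the whole set $x + U_i$. For an arbitrary point $x + u' \in x + U_i$ with $u' \in U_i$, write $x + u' = y + (x - y) + u'$. Here $x - y \in U_{\theta(j)}$ and $u' \in U_i \subset U_j \subset U_{\theta(j)}$ (using that $\theta$ is increasing, so $j \le \theta(j)$, and the family is increasing). Therefore $(x - y) + u' \in U_{\theta(j)} + U_{\theta(j)} = 2U_{\theta(j)} \subset U_{\theta(\theta(j))} = U_{\theta^2(j)}$, again by the difference axiom applied at the level $\theta(j)$. This shows $x + u' \in y + U_{\theta^2(j)}$, and since $u' \in U_i$ was arbitrary, $x + U_i \subset y + U_{\theta^2(j)}$, which is exactly the claim.

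There is essentially no serious obstacle here: the only subtlety — and the reason two applications of $\theta$ appear rather than one — is that we must spend one factor of $\theta$ to merge the overlap displacement $x - y$ into a single base set, and a second factor to merge that displacement with the radius of the set $U_i$ being transported. I would take care to state explicitly at the outset that the symmetry assumption on the $U_k$ (already in force from the preceding paragraph of the paper) is what lets us write $U_j - U_j = 2U_j$ and $U_i - U_i = 2U_i$, so that the difference axiom directly yields the containments in $U_{\theta(j)}$ and $U_{\theta^2(j)}$. The monotonicity facts ($U_i \subset U_j$ for $i \le j$, and $j \le \theta(j)$) are used repeatedly and I would simply invoke them from the definition of a covering family.
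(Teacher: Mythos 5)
Your proof is correct and follows essentially the same route as the paper's: pick a common point to see that $x-y\in U_j-U_j\subset U_{\theta(j)}$ (using $U_i\subset U_j$ and symmetry), then absorb $U_i\subset U_{\theta(j)}$ and apply the difference axiom once more at level $\theta(j)$ to land in $y+U_{\theta^2(j)}$. Nothing further is needed.
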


\begin{proof}
There are two point $u_i\in U_i$ and $u_j\in U_j$ such that $x+u_i=y+u_j$. Then $x=y+u_i-u_j\in y+ U_j-U_j\subset y+U_{\theta(j)}$ and therefore $x+U_i\subset y+U_{\theta(j)}+U_{\theta(j)}\subset y+U_{\theta^2(j)}$ (recall that we assume that the base sets are symmetric).
\end{proof}

\begin{remark}\label{rem:jota}
For a given $V\in \mathcal{B}$, where $\mathcal{B}$ is the base of $G$ defined as in \eqref{eq:basesets}
we will denote by $j(V)\in\mathbb{Z}$  the maximum integer such that  $V=x+U_{j(V)}$ for some $x\in G$. To see that such a number exists, let us define $N(V)=\{j\in \mathbb{Z}: \exists \,x\in G , V=x+U_j\}$ and show that  $\sup N(V)<\infty$. If $\sup N(V)=\infty$, we could find a sequence $\{x_n\}_{n\in \mathbb{N}}$ of points in $G$ and a sequence of integer indices $\{i_n\}_{n\in \mathbb{N}}$ such that $i_n\to\infty$ as $n\to\infty$ and 
$$
V=x_n+U_{i_n} \qquad \text{ for all } n\in\mathbb{N}.
$$

By compactness of $\overline{V}$ we can assume (relabelling) that the sequence converges to some $x\in G$, which we can assume to be the origin. Now we claim that, for any $j\in \mathbb{N}$, there is some $m\in \mathbb{N}$ such that $U_j\subset x_m+U_{i_m}$ and from this fact would follow that $\mu(V)=\infty$, but this implies that $\infty =\mu(V)\le \mu(\overline{V})<\infty$ whis is a contradiction.   To verify the claim, fix $U_j$ and choose $n_0$ such that $x_n\in U_j$  and $i_n\geq j$ for all $n\ge n_0$. Then we have that
$$
U_j\cap x_n + U_{i_n}\neq \emptyset
$$
for all $n\ge n_0$. Furthermore, the above still holds if we replace $x_n$  by any $x_m$ with $m\ge n \ge n_0$ since $x_m\in U_j$ and $x_m\in x_m + U_{i_n}$. Therefore by the engulfing property (see e.g. Lemma \ref{lem:engulfing-property}) we obtain that 
$$
U_j\subset x_m + U_{\theta^2(i_n)} \subset x_m + U_{i_m}
$$
for any $m$ such that $i_m\ge \theta^2(i_n)$.
\end{remark}

In order to introduce the local maximal function, we first define a local base for a fixed base set $U$.  

\begin{definition}\label{def:localBase}
Let $U\in\mathcal{B}$ be a fixed base set and let  $k:=j(U)$. The local base $\mathcal{B}_U$ is defined as
\begin{equation}\label{eq:localBase}
\mathcal{B}_U:=\left \{y+U_j: y\in U, j\le k\right \}.
\end{equation}
We also defined the {\it enlarged} set $\widehat{U}$ by the formula
\begin{equation}\label{eq:U-hat}
\widehat{U}:=\bigcup_{V\in \mathcal{B}_U}V.
\end{equation}
\end{definition}

\begin{lemma}\label{lem:engulfing}
Let $U=x+U_{k}$ be a fixed base set in $\mathcal{B}$ and set $k=j(U)$. 
We then have the following geometric properties:
\begin{enumerate}
\item[$(a)$] If $V\in  \mathcal{B}_U$ then $V\subset x+U_{\theta(k)}$. 
\item[$(b)$] For any $z\in U$, we have that 
\begin{equation*}\label{eq:engulfing1}
\widehat{U}\subset z+U_{\theta^2(k)},
\end{equation*}
where $\widehat{U}$ is as in \eqref{eq:U-hat}.
As a consequence of this last property, we obtain 
\begin{equation*}\label{eq:engulfing2}
\mu(\widehat{U})\le \mu(z+U_{\theta^2(k)})\le D^2\mu(z+U_k)
\end{equation*}
for any $z\in U$. In particular, since $U=x+U_k$, $\mu(\widehat{U})\le D^2\mu(U)$.
\end{enumerate}
\end{lemma}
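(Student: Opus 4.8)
The plan is to prove Lemma~\ref{lem:engulfing} by reducing everything to the engulfing property already established in Lemma~\ref{lem:engulfing-property}, together with the doubling hypothesis from the definition of a covering family. Both statements are purely geometric, so no analysis is involved; the only subtlety is bookkeeping with the indices and the monotone function $\theta$.

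For part $(a)$, take $V\in\mathcal{B}_U$, so that $V=y+U_j$ with $y\in U$ and $j\le k$. Since $y\in U=x+U_k$, there is some $u\in U_k$ with $y=x+u$, hence, using that the base sets are symmetric, $V=x+u+U_j\subset x+U_k+U_j\subset x+U_k+U_k=x+U_k-U_k\subset x+U_{\theta(k)}$, where the last inclusion is the second bullet of condition~(2) in the definition of a covering family. (Alternatively one may invoke Lemma~\ref{lem:engulfing-property} directly: $U$ and $V$ intersect since $y\in U\cap V$, and $j\le k$, so $V\subset x+U_{\theta^2(k)}$; but the argument above gives the slightly better index $\theta(k)$, which is what the statement claims.)

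For part $(b)$, fix $z\in U$. First I would show $\widehat{U}\subset z+U_{\theta^2(k)}$. By definition $\widehat{U}=\bigcup_{V\in\mathcal{B}_U}V$, so it suffices to show $V\subset z+U_{\theta^2(k)}$ for every $V=y+U_j\in\mathcal{B}_U$. Both $z$ and $y$ lie in $U=x+U_k$, so $z-y\in U_k-U_k\subset U_{\theta(k)}$, and since $j\le k\le\theta(k)$ we get $V=y+U_j\subset y+U_k\subset y+U_{\theta(k)}\subset z+U_{\theta(k)}+U_{\theta(k)}=z+U_{\theta(k)}-U_{\theta(k)}\subset z+U_{\theta(\theta(k))}=z+U_{\theta^2(k)}$, again using symmetry and the second bullet of condition~(2). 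Taking the union over $V$ yields the claimed inclusion. Applying $\mu$ and the monotonicity of $\theta$ with the doubling property twice ($\mu(z+U_{\theta(i)})\le D\,\mu(z+U_i)$ applied with $i=k$ and then with $i=\theta(k)$, noting $\theta$ is increasing so $\mu(z+U_{\theta^2(k)})\le D\mu(z+U_{\theta(k)})\le D^2\mu(z+U_k)$) gives $\mu(\widehat{U})\le\mu(z+U_{\theta^2(k)})\le D^2\mu(z+U_k)$. Specializing to $z=x$, for which $z+U_k=U$, yields $\mu(\widehat{U})\le D^2\mu(U)$.

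I do not expect a genuine obstacle here; the statement is elementary once Lemma~\ref{lem:engulfing-property} and the covering-family axioms are in hand. The only point requiring a little care is making sure the correct iterate of $\theta$ appears in each place — $\theta(k)$ in $(a)$ versus $\theta^2(k)$ in $(b)$ — which comes down to how many times one needs to absorb a difference of the form $U_i-U_i$: once in $(a)$ (just to contain $V$ relative to the center $x$ of $U$) and twice in $(b)$ (once to move the center from $y$ to $z$, once more to absorb $U_k$ itself), and to using that $\theta$ is increasing so that $j\le k$ propagates through the applications of $\theta$ and of the doubling estimate.
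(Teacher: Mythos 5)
Your proof is correct and follows essentially the same route as the paper: part $(a)$ by writing $V=y+U_j\subset x+U_k+U_k\subset x+U_{\theta(k)}$, and part $(b)$ by shifting the center into $z$ and absorbing one more difference set $U_{\theta(k)}+U_{\theta(k)}\subset U_{\theta^2(k)}$, followed by two applications of the doubling axiom. The only cosmetic difference is that the paper shifts from the center $x$ (after invoking part $(a)$) while you shift directly from $y$ to $z$; the index bookkeeping is identical.
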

\begin{proof}
$(a)$. Let $V=y+U_j$ with $j\le k$ and take any $z\in V$. Then $z=y+u_j$ with $u_j\in U_j\subset U_k$. Since $y\in U$ we can write $y=x+u_k$, $u_k\in U_k$. Then we have that $z=x+u_j+u_k\in x+U_k+U_k\subset x+U_{\theta(k)}$.

$(b)$. Let $V\in \mathcal{B}_U$, $V=y+U_j$ with $y\in U$, $j\le k$. By Lemma \ref{lem:engulfing-property}, since $V\cap U\neq \emptyset$, we have that $V\subset x+U_{\theta(k)}$. Take any $z\in U$, $z=x+u_k$, $u_k\in U_k$. Then
\begin{equation*}
V\subset x+U_{\theta(k)}= z-u_k+U_{\theta(k)}\subset z-U_k+U_{\theta(k)}\subset z-U_{\theta(k)}+U_{\theta(k)}\subset z+U_{\theta^2(k)}.
\end{equation*}
\end{proof}

We now define the local maximal function as follows
\begin{equation}\label{eq:localMaximal}
M_Uf(y):=\sup_{y\in V\in\mathcal{B}_U}\avgint_V |f(z)| \ d\mu(z)
\end{equation}
for any $y\in\widehat{U}$  and and $M_Uf(y)=0$ otherwise.

\begin{remark}\label{rem:lebesgue} $(a)$
In \cite[Theorem 44.18]{HR-vol2}, it is proven a version of the Lebesgue Differentiation Theorem with respect to the Haar measure for LCA groups having a $D'$-sequences (cf. \cite[Definition 44.10]{HR-vol2}). A careful reading of the proof of  \cite[Theorem 44.18]{HR-vol2} reveals that the result is still true with the obvious changes  for measures which are not translation invariant. Thus, since a covering family is in particular a $D'$-sequence, we have that the Lebesgue Differentiation Theorem holds in our context. 

$(b)$
As a consequence of the Lebesgue Differentiation Theorem, we have the elementary but important property of the local  maximal function:  \\$f(x)\le M_Uf(x)$ $\mu$-almost everywhere $x\in U$.
\end{remark}

Consider now, for a fixed $U\in\mathcal{B}$,  the level set for the local maximal function acting on a weight $w$ at scale $\lambda>0$:
\begin{equation}\label{eq:Omegalambda}
\Omega_\lambda:=\left \{x\in \widehat{U}: M_Uw(x)>\lambda\right \}.
\end{equation}

A key instrument will be a Calder\'on-Zygmund decomposition of $\Omega_\lambda$. We will obtain it by using an adapted version of a covering lemma from \cite[Lemma 2.2.1]{EG77}.  Although the proof follows standard arguments, we include it here for completeness.
When $w$ be a nonnegative and locally integrable function on $G$ and $V\subseteq G$ is relatively compact we denote the average of $w$ on $V$ as $w_V$; that is, $w_V=\avgint_{V}w\ d\mu$.

\begin{lemma}\label{lem:CZ}
Let $U\in\mathcal{B}$ be a fixed base set  in $G$ and let $w$ be a nonnegative and integrable function supported on $\widehat{U}$. For  $\lambda>w_{\widehat{U}}$, define $\Omega_\lambda$ as in \eqref{eq:Omegalambda}. If $\Omega_\lambda$ is nonempty, there exists a finite or countable index set $Q$ and a family $\{y_i+U_{{\alpha_i}}\}_{i\in Q}$ of pairwise disjoint base sets from $\mathcal{B}_U$ such that
\begin{enumerate}
\item[$(a)$]The sequence $\{{\alpha_i}\}_{i\in Q}$ is decreasing.
\item[$(b)$] $\displaystyle\bigcup_{i\in Q} y_i+U_{{\alpha_i}}\subset\Omega_\lambda\subset \displaystyle\bigcup_{i\in Q} y_i+U_{\theta^2(\alpha_i)}$.
\item[$(c)$] For any $i\in Q$, we have that 
\begin{equation*}
\lambda < \avgint_{y_i+U_{{\alpha_i}}} w\ d\mu.
\end{equation*} 
\item[$(d)$] Given $r>{\alpha_i}$ for some $i\in Q$, then 
\begin{equation}\label{eq:big-average}
\avgint_{y_i+U_r}w\ d\mu \le D^2\lambda.
\end{equation}
\end{enumerate}
\end{lemma}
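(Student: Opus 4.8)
The plan is to run a stopping-time (Calderón–Zygmund) argument adapted to the local base $\mathcal{B}_U$, exactly mimicking the classical construction but keeping careful track of the loss of geometry through the function $\theta$. First I would fix the level $\lambda > w_{\widehat U}$ and observe that, by definition of $M_U$, every point $x \in \Omega_\lambda$ lies in some base set $V = y + U_j \in \mathcal{B}_U$ with $\avgint_V w\, d\mu > \lambda$. The key point is that these ``bad'' base sets cannot have arbitrarily large index: if $r$ is large enough that $y + U_r \supset U$ (which exists since $j(U)=k$ is the maximal index realizing $U$, and by Remark~\ref{rem:jota} / Lemma~\ref{lem:engulfing} larger sets engulf $U$), then $y+U_r$ contains $\widehat U \supset \operatorname{supp} w$, so $\avgint_{y+U_r} w\, d\mu \le w(\widehat U)/\mu(y+U_r) \le w_{\widehat U} < \lambda$ once $\mu(y+U_r)\ge \mu(\widehat U)$; more precisely one uses the doubling inequality $\mu(y+U_{\theta(r)})\le D^2 \mu(y+U_r)$ to pass from scale $r$ to the next admissible scale and pick up at most a factor $D^2$, which is exactly where the constant $D^2$ in part~$(d)$ comes from.

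Concretely, for each $x\in\Omega_\lambda$ let $\alpha(x)$ be the \emph{largest} index $j\le k$ such that $x$ belongs to some $y+U_j\in\mathcal{B}_U$ with average of $w$ exceeding $\lambda$; the previous paragraph shows $\alpha(x)$ is well defined (the set of such $j$ is nonempty and bounded above by some index depending only on $U$ and $w$). Fix for each $x$ such a maximal base set $B_x = y_x + U_{\alpha(x)}$. Among all the $B_x$, $x\in\Omega_\lambda$, I would extract a maximal pairwise-disjoint subfamily $\{y_i+U_{\alpha_i}\}_{i\in Q}$ by a greedy selection ordered by decreasing index $\alpha_i$ (this uses that the set of occurring indices is bounded above, so one can start from the top; if there are infinitely many one proceeds downward through the integer scales, and disjointness forces the family to be at most countable since each selected set has positive measure and they all sit inside $\widehat U$, which has finite measure). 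This gives $(a)$ and the left inclusion in $(b)$ immediately, and $(c)$ is just maximality of the average on each selected set. For the right inclusion in $(b)$: any $x\in\Omega_\lambda$ has its set $B_x$ meeting some selected $y_i+U_{\alpha_i}$ with $\alpha_i\ge \alpha(x)$ (by maximality of the selection with respect to decreasing index and the definition of $\alpha(x)$ as maximal); then the engulfing property, Lemma~\ref{lem:engulfing-property}, gives $B_x\subset y_i+U_{\theta^2(\alpha_i)}$, hence $x\in y_i+U_{\theta^2(\alpha_i)}$.

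Finally, for $(d)$: fix $i\in Q$ and $r>\alpha_i$ with $r\le k$ (if $r>k$ the set $y_i+U_r$ exceeds the local base and the estimate follows from $\operatorname{supp} w\subset\widehat U$ together with one application of doubling, as above). By maximality of $\alpha_i = \alpha(y_i)$, the base set $y_i+U_r$ does \emph{not} have $w$-average exceeding $\lambda$ — wait, one must be slightly careful: $\alpha(x)$ was defined as the largest index over \emph{all} points, so for the center $y_i$ the set $y_i+U_r$ with $r>\alpha_i$ could a priori still have large average via a different base point. The correct way is: write $y_i+U_r \supset y_i+U_{\alpha_i}$ and note $y_i+U_r$ arises from the point $y_i\in U$; by the very definition of $\alpha(y_i)$ as the maximal admissible index at $y_i$, any $y_i+U_r$ with $r>\alpha_i$ has $\avgint_{y_i+U_r} w\, d\mu \le \lambda$ if $y_i+U_r\in\mathcal B_U$; and if the next scale lands outside, use doubling to lose the factor $D^2$. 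Summarizing, either directly or after one doubling step we get $\avgint_{y_i+U_r} w\, d\mu \le D^2\lambda$, which is $(d)$.

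The main obstacle, and the place deserving the most care, is the boundedness-of-scales issue: in $\mathbb R^n$ one simply says ``cubes containing $x$ with average above $\lambda$ have bounded side length''; here the absence of a metric means this must be extracted from the structure of the covering family (via Remark~\ref{rem:jota} and Lemma~\ref{lem:engulfing}), and the passage from one admissible scale to the next is not continuous — it jumps through $\theta$ — which forces the $D^2$ losses in $(b)$ and $(d)$ and must be tracked honestly. Everything else is a routine transcription of the classical stopping-time argument.
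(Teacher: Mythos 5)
Your selection scheme is genuinely different from the paper's: you extract a maximal pairwise-disjoint subfamily of the chosen sets $B_x$, working scale by scale, and you get the covering $\Omega_\lambda\subset\bigcup_i \bigl(y_i+U_{\theta^2(\alpha_i)}\bigr)$ cheaply from maximality of the subfamily plus Lemma \ref{lem:engulfing-property}, whereas the paper picks points $x_n$ maximizing $\alpha$ over the not-yet-covered region $A_n=\Omega_\lambda\setminus\bigcup_{\ell\le n}\bigl(y_\ell+U_{\theta^2(\alpha_\ell)}\bigr)$ and must then prove that this process exhausts $\Omega_\lambda$. Items (c), the left inclusion in (b), disjointness and countability are fine in your version. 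The genuine gap is item $(a)$. You assert that the family can be enumerated ``ordered by decreasing index, proceeding downward through the integer scales'', but you never rule out that one scale $j$ contributes \emph{infinitely} many disjoint selected sets while some strictly lower scale also contributes; in that case no enumeration of $Q$ by $\mathbb{N}$ with non-increasing $\alpha_i$ exists. Your countability argument (disjoint sets of positive measure inside $\widehat U$, with $\mu(\widehat U)\le D^2\mu(U)<\infty$) does not give finiteness per scale, because $\mu$ is not translation invariant and so $\mu(y+U_j)$ has no obvious lower bound uniform in $y\in U$. Supplying that bound requires exactly the compactness-plus-doubling argument which is the technical core of the paper's exhaustion step: choose $N$ with $\overline U\subset U_N$; for $y\in U$ symmetry gives $0\in y+U_N$, hence $U_N\subset y+2U_N\subset y+U_{\theta(N)}$, and iterating the doubling property yields $\mu(y+U_j)\ge D^{-m}\mu(U_N)>0$ with $m$ depending only on $j$ and $N$; then each scale is finite and your decreasing enumeration exists. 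In short, you correctly sensed that ``boundedness of scales'' is where the lack of geometry bites, but you located the difficulty at the well-definedness of $\alpha(x)$ (which is trivial, since every member of $\mathcal{B}_U$ has index at most $j(U)$ by definition of the local base), while in your scheme the compactness argument is genuinely needed for $(a)$ --- just as in the paper it is needed for the covering.

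There is also a smaller, fixable slip in $(d)$ for the case $\alpha_i<r\le j(U)$: you appeal to maximality of ``the admissible index at $y_i$'', but your construction only guarantees $\alpha(y_i)\ge\alpha_i$, not equality, since $\alpha_i=\alpha(x)$ for the generating point $x$ and $y_i$ may belong to larger admissible sets centered elsewhere. The correct argument goes through the point $x$ itself: since $U_{\alpha_i}\subset U_r$, the set $y_i+U_r\in\mathcal{B}_U$ contains $x$, so if $\avgint_{y_i+U_r}w\,d\mu>\lambda$ then the maximality defining $\alpha(x)$ would give $\alpha_i=\alpha(x)\ge r>\alpha_i$, a contradiction; in this regime the bound is in fact $\lambda$ with no constant. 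The factor $D^2$ enters only when $r>j(U)$, where it comes from $\widehat U\subset y_i+U_{\theta^2(j(U))}$ (Lemma \ref{lem:engulfing}) together with one application of doubling, not from ``jumping scales through $\theta$'' as your first paragraph suggests (note also that $y+U_r\supset U$ alone does not give $y+U_r\supset\widehat U$). These points are repairable, but as written item $(a)$ is unproven.
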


\begin{proof}
Suppose that there is no finite sequence of points in $\Omega_\lambda$ such that the conclusion holds (in that case, there is nothing to prove). 
For $x\in \Omega_\lambda$, define 
\begin{equation}
\alpha(x)=\max \left \{j\in\mathbb{Z}: \exists\, V=y+U_j \in \mathcal{B}_U, x\in V, \avgint_{V}w\ d\mu>\lambda\right \}.
\end{equation}
Since $V=y+U_j \in \mathcal{B}_U$ implies  $j\leq j(U)$, we have that $\alpha$ is well-defined.
Consider now, for each $x\in \Omega_\lambda$ a base set $V_x\in   \mathcal{B}_U$,  $V_x:=y_x+U_{\alpha(x)}$ such that $x\in V_x$. In other words, one of the base sets in $\mathcal{B}$ containing the point $x$ where the map $\alpha$ attains its value. Observe that in particular, $\alpha(y_x)\geq\alpha(x)$.

We start by picking  $x_1$ as a extremal point for $\alpha$, that is $\alpha(x_1)\ge \alpha(x)$ for all $x\in\Omega_\lambda$. Put $\alpha_1=\alpha(x_1)$ and $y_1:=y_{x_1}$ such that $V_{x_1}=y_1+U_{\alpha_1}$. Note that, since $\alpha_1\le\alpha(y_1)\le\alpha(x_1)=\alpha_1$, we also have that $\alpha(y_1)=\alpha_1$.
Now suppose that we have chosen the first $n$ points $y_1,\dots, y_n$ and their respective base sets $U_{\alpha_1},\dots,U_{\alpha_n}$ such that
\begin{itemize}
\item the sets $V_j:= y_j+U_{\alpha_j}$, $1\le j\le n$ are pairwise disjoint,
\item $\alpha_j:=\alpha(y_j)\ge \alpha(x)$ for all $x\in A_{j-1}$, where 
\begin{equation}
A_j:=\Omega_\lambda\setminus \bigcup_{\ell\le j} y_\ell +U_{\theta^2(\alpha_\ell)}\qquad 1\le j \le n.
\end{equation}
\end{itemize}
Since we are assuming that this procedure never ends, we have that $A_j\neq\emptyset$ for all $1\leq j\leq n$. 
Therefore we can choose $ x_{n+1}\in A_n$ such that $\alpha_{n+1}:=\alpha(x_{n+1})\geq \alpha(x)$ for all $x\in A_n$. This means that there is base set $V_{n+1}:=y_{n+1}+U_{\alpha_{n+1}}$ and in particular $w_{V_{n+1}}>\lambda$ and $\alpha(y_{n+1})=\alpha_{n+1}$. Note that this construction produces a decreasing sequence $\{\alpha_n\}_{n\in \mathbb{N}}$. Let's see that $V_{n+1}\cap V_j=\emptyset$ for all $1\le j \le n$. Supposing that this is not the case, we could find $u\in U_{\alpha_{n+1}}$ and $v\in U_{\alpha_j}$ for some $j\le n$ such that 
\begin{equation*}
y_{n+1}+u=y_j+v.
\end{equation*}
Since $x_{n+1}\in V_{n+1}$, we have that for some $z\in U_{\alpha_{n+1}}$,
\begin{equation*}
x_{n+1}=y_{n+1}+z=y_j+v-u+z\in y_j+ U_{\alpha_j}-U_{\alpha_{n+1}}+U_{\alpha_{n+1}}.
\end{equation*}
Since $U_{\alpha_{n+1}}\subset U_{\alpha_{j}}$ and trivially $U_{\alpha_{j}}\subset U_{\theta(\alpha_{j})}$, we get that 
\begin{equation*}
 x_{n+1}\in y_j+U_{\theta^2(\alpha_{j})},
 \end{equation*} 
which is a contradiction by the choice of $x_{n+1}$.

We are left to prove that this procedure exhausts the set $\Omega_\lambda$. If not, there is a point $x\in A_n$  with $\alpha(x)\le \alpha_n$ for all $n\ge 1$. Define the set  $S$ as
\begin{equation*}
S:=\{y_n: n\in\mathbb{N}\}.
\end{equation*}
Since 
\begin{equation*}
S\subset \{z\in \Omega_\lambda: \alpha(z)\ge\alpha(x)\}\subset \widehat{U}
\end{equation*}
and $\widehat{U}$ is contained in some base set (see item (b) in Lemma \ref{lem:engulfing})
we conclude that $S$ is relatively compact.

By monotonicity of $\alpha$, we have that $U_{\alpha_n}\subset U_{\alpha_1}$. Therefore the set 
\begin{equation*}
F:=\bigcup_n (y_n+U_{\alpha_n})\subset S+U_{\alpha_1}
\end{equation*}
is also relatively compact and this implies that $\mu(\overline{F})<\infty$. Now consider $N\in \mathbb{Z}$ such that $\overline{S}\subset U_N$ and an integer $r>0$ such that $\theta^r(\alpha(x))\ge N$. Then we have that for any $n\in\mathbb{N}$, $y_n\in S\subset U_N\subset U_{\theta^r(\alpha(x))}$ and thus   $0\in y_n+U_{\theta^r(\alpha(x))}$. Further, we obtain that  
\begin{equation*}
U_N=0+U_N\subset y_n+U_{\theta^r(\alpha(x))}+U_N\subset y_n+2U_{\theta^r(\alpha(x))}\subset y_n+U_{\theta^{r+1}(\alpha(x))}.
\end{equation*}
The doubling property shows that 
\begin{equation*}
\mu(U_N)\le D^{r+1}\mu(y_n +U_{\alpha(x)})
\end{equation*}
and this implies that
\begin{equation*}
\mu(F)=\sum_n\mu(y_n+U_{\alpha_n})\ge \sum_n\mu(y_n+U_{\alpha(x)})
\ge  D^{-(r+1)}\sum_n\mu(U_N)=\infty.
\end{equation*}
This contradicts the condition $\mu(\overline{F})<\infty$  and we conclude with the proof of items (a), (b) and (c) of the lemma.

We prove now item (d). Towards to control the average on $y_i+ U_r$ we consider two cases: 
first we consider $r\leq k:=j(U)$. Then $y_i+ U_{r}\in \mathcal{B}_U$ and by maximality we have 
$\displaystyle \avgint_{y_i+ U_r} w \ d\mu\leq\lambda$. Indeed, if not we would have that $\alpha_i=\alpha(y_i)\geq r > \alpha_i$. 
Second, in case 
$r> k$, we have that $\theta^2(r)> \theta^2(k)$ and thus, by Lemma \ref{lem:engulfing},
$y_i+U_{\theta^2(r)}\supset 
y_i+U_{\theta^2(k)}\supset\widehat{U}$. Therefore, since $w=0$ a.e $\widehat{U}^c$, we have 
$$
\avgint_{y_i+ U_r }w \ d\mu\leq \frac{\mu(\widehat{U})}{\mu(y_i+ U_r)}\avgint_{\widehat{U}}w \ d\mu\leq D^2\lambda. 
$$
The lemma is now completely proven.
\end{proof}

Now we present a localization argument for the local maximal function $M_U$. The idea is better understood when considering the usual dyadic maximal function $M^d_Q$ localized on a cube $Q$ in $\mathbb{R}^n$. Suppose that the level set $\Omega_\lambda=\{x\in Q: M^d_Qw(x)>\lambda\}$ for $\lambda>w_Q$ is decomposed into dyadic subcubes of $Q$ such that $Q=\bigcup_i Q_i$ and the cubes $Q_i$ are maximal with respect to the condition $w_{Q_i}>\lambda$. Then the conclusion is that for any $x\in Q_i$ the equality $M^d_Qw(x)=M^d_Q(w\chi_{Q_i})(x)$ holds. In this more general setting, the analogous result is contained in the following lemma which has not a direct proof as in the dyadic case. 

For simplicity in the exposition, we introduce the following notation. Given a base set of the form $V=y+U_j$ we denote by $V^*$ the dilation of $V$ by $\theta$, i.e. $V^*=y+U_{\theta(j)}$. Further iterations of this operation are defined recursively, that is, $V^{**}=(V^*)^*$ and $V^{n*}$ for $n$ iterations of the dilation operation.

\begin{lemma}\label{lem:local-max}
Let $U\in\mathcal{B}$ be a fixed base set and consider  $w=w\chi_{\widehat{U}}$ a nonnegative and integrable function on $\widehat{U}$ where $\widehat{U}$  is as in \eqref{eq:U-hat}. For   $\lambda>w_{\widehat{U}}$ let $\Omega_\lambda$ defined as above and let $\{V_i\}_{i\in Q}=\{y_i+U_{\alpha_i}\}_{i\in Q}$ be the C-Z decomposition of $\Omega_\lambda$ given by Lemma \ref{lem:CZ}. Then, for $L=D^6$ any $i\in Q$ and any $x\in V_i^{**}\cap \Omega_{L\lambda}$ we have
\begin{equation}\label{eq:local-max}
M_Uw(x)\le M_U(w\chi_{V_i^{4*}})(x).
\end{equation}
\end{lemma}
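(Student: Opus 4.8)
The goal is to show that for $x \in V_i^{**} \cap \Omega_{L\lambda}$ with $L = D^6$, the value $M_Uw(x)$ is not affected by the portion of $w$ lying outside $V_i^{4*}$. So I would fix such an $x$, take an arbitrary base set $W = z + U_s \in \mathcal{B}_U$ with $x \in W$ realizing (or nearly realizing) the supremum defining $M_Uw(x)$, and I would argue that either $W \subset V_i^{4*}$ — in which case the average of $w$ over $W$ coincides with the average of $w\chi_{V_i^{4*}}$ over $W$ and there is nothing to prove — or else $W$ is "large", and then I will show that the average of $w$ over $W$ is at most $L\lambda$, which contradicts $x \in \Omega_{L\lambda}$ once we observe $M_Uw(x) > L\lambda$. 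The dichotomy between these two cases should be governed by comparing the scale $s$ of $W$ with the scale $\alpha_i$ of $V_i$.

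The first case to handle is $s \le \alpha_i$. Since $x \in W \cap V_i^{**}$ and $W$ has scale no larger than that of $V_i^{**}$, the engulfing property (Lemma~\ref{lem:engulfing-property}) gives $W \subset V_i^{4*}$ (a couple of applications of $\theta$ starting from $V_i^{**}$), so this case is immediate. The substantive case is $s > \alpha_i$. Here I want to bound $\avgint_W w\, d\mu$. The idea is to pass from $W$ to a base set concentric with $y_i$: since $x \in W \cap V_i$ is nonempty (because $x \in V_i^{**}$ and... actually one needs $W \cap V_i \neq \emptyset$, which follows because $x \in V_i^{**}$ and $W \ni x$ with $s$ large — here one uses engulfing to see $V_i \subset W^{m*}$ or, conversely, that $y_i + U_r \supset W$ for a controlled $r > \alpha_i$). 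Concretely, engulfing applied to $W = z+U_s$ and $V_i = y_i + U_{\alpha_i}$ with $\alpha_i < s$, using $x \in W$ and the relation between $x$ and $y_i$ coming from $x \in V_i^{**}$, should yield $W \subset y_i + U_r$ for some $r$ with $r > \alpha_i$ and $\theta$-controlled in terms of $s$; then part (d) of Lemma~\ref{lem:CZ} gives $\avgint_{y_i + U_r} w\, d\mu \le D^2 \lambda$, and a doubling estimate $\mu(y_i+U_r) \le D^{c}\mu(W)$ for the appropriate constant $c$ (this is where the number of $\theta$-iterations, hence the exponent in $L = D^6$, gets pinned down) converts this into $\avgint_W w\,d\mu \le D^6 \lambda = L\lambda$. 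Finally, since $x \in \Omega_{L\lambda}$ means $M_Uw(x) > L\lambda$, there must exist \emph{some} admissible $W \ni x$ with $\avgint_W w\,d\mu > L\lambda$; by the above this $W$ must satisfy $s \le \alpha_i$, hence $W \subset V_i^{4*}$, and then $M_Uw(x) = \sup$ over such $W$'s $= M_U(w\chi_{V_i^{4*}})(x)$; more precisely, any $W$ contributing more than $L\lambda$ to the supremum lies inside $V_i^{4*}$, and the $W$'s contributing at most $L\lambda$ do not affect the supremum since it exceeds $L\lambda$. This yields \eqref{eq:local-max}.

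The main obstacle I anticipate is the bookkeeping of the $\theta$-iterations: one must verify that the chain of engulfing inclusions starting from $x \in V_i^{**}$ and a base set $W$ of scale $s > \alpha_i$ lands inside $y_i + U_r$ with a number of $\theta$-applications small enough that the resulting doubling constant is exactly $D^6$ (and, on the other side, that a base set of scale $\le \alpha_i$ meeting $V_i^{**}$ sits inside $V_i^{4*}$ and not something larger). This is purely a matter of tracking how many times $\theta$ is applied in Lemma~\ref{lem:engulfing-property} (each use costs $\theta^2$) and matching it against the doubling property $\mu(x+U_{\theta(j)}) \le D\mu(x+U_j)$; the dilations $V^*, V^{**}, V^{4*}$ and the hypothesis $x \in V_i^{**}$ are precisely calibrated so the two sides meet. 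No analytic difficulty beyond Lemma~\ref{lem:CZ}(d) and the doubling property is expected; the whole argument is geometric.
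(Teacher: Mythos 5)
Your strategy is the paper's: fix $W=z+U_s\in\mathcal{B}_U$ with $x\in W$ and $\avgint_W w\,d\mu>L\lambda$, run a dichotomy on the scale $s$, use engulfing to conclude $W\subset V_i^{4*}$ at small scales, and at large scales combine engulfing, Lemma~\ref{lem:CZ}(d) and doubling to force $\avgint_W w\,d\mu\le D^6\lambda$, so that only sets inside $V_i^{4*}$ can contribute above $L\lambda$; your closing remark that sets with average at most $L\lambda$ cannot affect the supremum, which exceeds $L\lambda$, is exactly the right way to finish (and is spelled out more explicitly than in the paper). The one substantive difference is where you place the cut. The paper splits at $s\le\theta^2(\alpha_i)$ versus $s>\theta^2(\alpha_i)$, and your own case-1 argument (``$W$ meets $V_i^{**}$ and has scale no larger than that of $V_i^{**}$'') already gives $W\subset V_i^{4*}$ for every $s\le\theta^2(\alpha_i)$, not only for $s\le\alpha_i$. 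This matters for the bookkeeping you deferred: with your threshold at $\alpha_i$, in the band $\alpha_i<s\le\theta^2(\alpha_i)$ the natural execution of your case 2 (engulf $W$ into $y_i+U_{\theta^4(\alpha_i)}$, apply Lemma~\ref{lem:CZ}(d), then compare $\mu(y_i+U_{\theta^4(\alpha_i)})\le\mu(z+U_{\theta^5(\alpha_i)})\le D^5\mu(z+U_{\alpha_i})\le D^5\mu(W)$) only yields $\avgint_W w\,d\mu\le D^7\lambda$, which is not enough for $L=D^6$; one can still rescue $D^6$ by choosing the scale adaptively (take $r=\theta^3(\alpha_i)$ when $s\le\theta(\alpha_i)$, and $r=\theta^4(\alpha_i)$ together with the inclusion $U_{\theta(\alpha_i)}\subset U_s$ when $\theta(\alpha_i)<s\le\theta^2(\alpha_i)$), but this is precisely the fiddly calibration your sketch leaves open. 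The cleaner route, and the paper's, is to reserve the average estimate for $s>\theta^2(\alpha_i)$ only: there $x\in V_i^{**}=y_i+U_{\theta^2(\alpha_i)}\subset y_i+U_s$, so $W$ and $y_i+U_s$ are intersecting base sets of the same scale, engulfing plus doubling give $\mu(y_i+U_{\theta^2(s)})\le D^4\mu(W)$, and Lemma~\ref{lem:CZ}(d) with $r=\theta^2(s)>\alpha_i$ contributes the remaining $D^2$, landing exactly on $L=D^6$. So your proposal is correct in outline and can be closed, but as calibrated it does not quite ``meet in the middle'' without the extra sub-case analysis; shifting the threshold to $\theta^2(\alpha_i)$ removes the difficulty and reproduces the paper's proof.
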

\begin{proof}
Let $x\in V^{**}_i\cap\Omega_{L\lambda}$. Then there exists $V\in\mathcal{B}_U$, $V=y+U_j$, with $y\in U$ and $j\le j(U)$ such that $x\in V$ and $w_V>L\lambda$. We claim that $j\le \theta^2(\alpha_i)$. To see that this is in fact true, suppose towards a contradiction, that $j>\theta^2(\alpha_i)$. Then, $V\subset y_i+U_{\theta^2(j)}$. Indeed, if $z\in V$ then $z=y+w$ with $w\in U_j$. On the other hand, since $x\in V^{**}_i\cap V$, $x=y_i+u=y+v$ with $u\in U_{\theta^2(\alpha_i)}$ and $v\in U_j$. Then 
\begin{equation*}
z=y+v-v+w=x -v+w=y_i+ u-v+w.
\end{equation*}
Since  $U_{\theta^2(\alpha_i)} \subset U_j$, we obtain that $z\in y_i+ U_j+U_{\theta(j)}\subset y_i+ U_{\theta^2(j)}$. As a consequence, 
\begin{equation*}
\avgint_V w\ d\mu\le \frac{\mu(y_i+ U_{\theta^2(j)})}{\mu(V)}\avgint_{y_i+ U_{\theta^2(j)}}w \ d\mu.
\end{equation*}
We note that since $\theta^2(\alpha_i)<j$, $x\in V\cap V^{**}_i\subset V\cap(y_i+U_j)$ and then, by the engulfing property we have that $y_i+U_j\subset y+U_{\theta^2(j)}$. 
Thus, using the doubling property of the measure $\mu$ we obtain 
$$
\frac{\mu(y_i+ U_{\theta^2(j)})}{\mu(y+U_j)}\leq D^2 \frac{\mu(y_i+ U_j)}{\mu(y+U_j)}
\leq D^2\frac{\mu(y+ U_{\theta^2(j)})}{\mu(y+U_j)}\leq D^4.
$$

Furthermore, since $\theta^2(j)\geq j>\theta^2(\alpha_i)\geq\alpha_i$, by item (4) in Lemma \eqref{lem:CZ} we have that
$$
\avgint_{y_i+ U_{\theta^2(j)}}w \ d\mu\leq 
D^2\lambda 
$$
and we can  conclude that 
\begin{equation*}
L\lambda<\avgint_V w\ d\mu\le D^6\lambda=L\lambda,
\end{equation*}
which gives a contradiction. Hence, the claim $j\le \theta^2(\alpha_i)$ holds. 

Now, using Lemma \ref{lem:engulfing-property} we have that $V\subset V^{4*}_i$ and then 
$$
\avgint_Vw\ d\mu=\avgint_Vw\chi_{V^{4*}_i}\ d\mu\le M(w\chi_{V^{4*}_i})(x)
$$
which proves inequality \eqref{eq:local-max}.

\end{proof}

\section{Proof of the main results}\label{sec:proofs}

We present here the proof of Theorem \ref{thm:SharpRHI}.

\begin{proof}[Proof of Theorem \ref{thm:SharpRHI}]

Step 1. We start with the following estimate for the local maximal function. Let $U=x_0+U_k$ be a fixed base set. We claim that, for $\varepsilon=\frac{1}{4D^{10}[w]_{A_\infty}-1}$, we have that
\begin{equation}\label{eq:Maximal-vs-w}
\avgint_{\widehat U} (M_Uw)^{1+\varepsilon} \ d\mu \le 2 [w]_{A_\infty}\left (\avgint_{\widehat U} w \ d\mu\right )^{1+\varepsilon}.
\end{equation}
Recall that we may assume that the weight $w$ is supported on $\widehat U$. Let $\Omega_\lambda$ defined as in \eqref{eq:Omegalambda}. We write the norm using the layer cake formula as follows
\begin{eqnarray*}
\int_{\widehat U} (M_Uw)^{1+\varepsilon} \ d\mu & = & \int_0^{\infty} \varepsilon \lambda^{\varepsilon-1}M_Uw(\Omega_\lambda)\ d\lambda\\
& = & \int_0^{w_{\widehat U}} \varepsilon \lambda^{\varepsilon-1}M_Uw(\Omega_\lambda)\ d\lambda
+ \int_{w_{\widehat U}}^\infty \varepsilon \lambda^{\varepsilon-1}M_Uw(\Omega_\lambda)\ d\lambda\\
& = & I + II.
\end{eqnarray*}

The first term is easily controlled by using the $A_\infty$ constant of $w$ (see \eqref{eq:Ainfty-definition}):
\begin{eqnarray*}
I\le M_Uw(\widehat U)w_{\widehat U}^\varepsilon & = &  w_{\widehat U}^\varepsilon\int_{\widehat U}M_Uw \ d\mu \\
 & \le & w_{\widehat U}^\varepsilon\int_{y+U_{\theta^2(k)}}M_U(w\chi_{y+U_{\theta^2(k)}}) \ d\mu\\
& \le &  w_{\widehat U}^\varepsilon [w]_{A_\infty}w(y+U_{\theta^2(k)})\\
& = &  w_{\widehat U}^\varepsilon [w]_{A_\infty}w(\widehat{U})
\end{eqnarray*}
where $y\in U$ and we used Lemma \ref{lem:engulfing} and the definition of $[w]_{A_\infty}$.

Now, for each $\lambda>w_{\widehat U}$ we consider $\{V_i\}_{i\in Q}$ the C-Z decomposition of $\Omega_\lambda$ from Lemma \ref{lem:CZ} to control $II$. We have that
\begin{eqnarray*}
M_Uw(\Omega_\lambda)\le \sum_i M_Uw(V_i^{**}).
\end{eqnarray*}
For any $i\in Q$ we write $V^{**}_i=V_1\cup V_2$ with 
$V_1:=V^{**}_i\cap\Omega_{L\lambda}$ 
and $V_2:=V^{**}_i\setminus\Omega_{L\lambda}$ where $L=D^6$. Thus, by Lemma \ref{lem:local-max} and the $A_{\infty}$ property \eqref{eq:Ainfty-definition} we have 
\begin{eqnarray*}
M_Uw(V_i^{**}) &=& \int_{V_1}M_Uw \ d\mu + \int_{V_2}M_Uw \ d\mu\\
& \le & \int_{V_1}M_U(w\chi_{V^{4*}_i})(x) \ d\mu + L\lambda\mu(V_2)\\
& \le & [w]_{A_{\infty}} w(V^{4*}_i) + L\lambda\mu(V^{4*}_i) =  \left([w]_{A_{\infty}} w_{V^{4*}_i}+ L\lambda\right)\mu(V^{4*}_i)\\\\
& \le & \left([w]_{A_{\infty}} \lambda D^2+ L\lambda\right)D^4\mu(V_i)\le 2[w]_{A_{\infty}} \lambda D^{10} \mu(V_i),
\end{eqnarray*}
where in the last inequality we have used \eqref{eq:big-average} and the doubling property of $\mu$. This gives
\begin{eqnarray*}
M_Uw(\Omega_\lambda) & \le & \sum_i M_Uw(V_i^{**})\le  2[w]_{A_{\infty}} \lambda D^{10}\sum_i \mu(V_i)\\
& \le & 2[w]_{A_{\infty}} \lambda D^{10}\mu(\Omega_\lambda).
\end{eqnarray*}
Thus, 
\begin{eqnarray*}
II & \le & 2[w]_{A_{\infty}} D^{10} \int_0^\infty \varepsilon \lambda^\varepsilon\mu(\Omega_\lambda)\ d\lambda\\
& = &  2[w]_{A_{\infty}} D^{10}\frac{\varepsilon}{\varepsilon+1}\int_{\widehat U}M_Uw^{1+\varepsilon}\, d\mu.
\end{eqnarray*}
Therefore, gathering all the estimations and averaging over $\widehat U$, we have that 
$$
\left(1- 2[w]_{A_{\infty}} D^{10} \frac{\varepsilon}{\varepsilon+1}\right)\avgint_{\widehat U}M_Uw^{1+\varepsilon}\, d\mu \le w_{\widehat U}^{1+\varepsilon}.
$$
Choosing $\varepsilon\le \frac{1}{4[w]_{A_{\infty}} D^{10}-1}$ we get that $1- 2[w]_{A_{\infty}} D^{10} \frac{\varepsilon}{\varepsilon+1}\ge \frac{1}{2}$ and we obtain the desired estimate \eqref{eq:Maximal-vs-w}.

Step 2. Now we proceed to prove the main estimate \eqref{eq:RHI}. By Remark \ref{rem:lebesgue} we have that  $w(x)\le M_Uw(x)$  holds on $U$. Then we obtain
\begin{equation*}
\int_U w^{1+\varepsilon}\ d\mu\le \int_{U} (M_Uw)^\varepsilon w \ d\mu\le \int_{\widehat{U}} (M_Uw)^\varepsilon w \ d\mu.
\end{equation*}
Once again we use the layer cake formula combined with the C-Z decomposition of $\Omega_\lambda$ and proceeding in a similar way as above we obtain
\begin{eqnarray*}
\int_{\widehat U} (M_Uw)^{\varepsilon} w \ d\mu & = & \int_0^{\infty} \varepsilon \lambda^{\varepsilon-1}w(\Omega_\lambda)\ d\lambda\\
& = & \int_0^{w_{\widehat U}} \varepsilon \lambda^{\varepsilon-1}w(\Omega_\lambda)\ d\lambda
+ \int_{w_{\widehat U}}^\infty \varepsilon \lambda^{\varepsilon-1}w(\Omega_\lambda)\ d\lambda\\
& \le & w(\widehat U)w_{\widehat U}^\varepsilon + \int_{w_{\widehat U}}^\infty \varepsilon \lambda^{\varepsilon-1}\sum_i w(V_i^{**})\ d\lambda\\
& \le & w(\widehat U)w_{\widehat U}^\varepsilon + D^2\int_{w_{\widehat U}}^\infty \varepsilon \lambda^{\varepsilon}\sum_i \mu(V_i^{**})\ d\lambda\\
& \le & w(\widehat U)w_{\widehat U}^\varepsilon + D^4\int_{w_{\widehat U}}^\infty \varepsilon \lambda^{\varepsilon}\sum_i \mu(V_i)\ d\lambda\\
& \le & w(\widehat U)w_{\widehat U}^\varepsilon + D^4\int_{0}^{\infty}\varepsilon \lambda^{\varepsilon} \mu(\Omega_\lambda)\ d\lambda\\
& \le & w(\widehat U)w_{\widehat U}^\varepsilon + \frac{D^4\varepsilon}{\varepsilon+1}\int_{\widehat U}(M_Uw)^{1+\varepsilon}\ d\mu\\
\end{eqnarray*}
Therefore, averaging over $U$, using that $\mu(\widehat{U})\le D^2 \mu(U)$ and \eqref{eq:Maximal-vs-w} we have
\begin{equation*}
\avgint_U w^{1+\varepsilon}\ d\mu\le D^2 w_{\widehat U}^{\varepsilon+1} + \frac{2D^6\varepsilon[w]_{A_\infty}}{\varepsilon+1}\left(\avgint_{\widehat U}w\ d\mu\right)^{1+\varepsilon}.
\end{equation*}
By our previous choice of $\varepsilon$, $\frac{2D^6\varepsilon[w]_{A_\infty}}{\varepsilon+1}\le \frac{2D^{10}\varepsilon[w]_{A_\infty}}{\varepsilon+1}\le \frac{1}{2}$
and we conclude that 
\begin{equation*}
\avgint_U w^{1+\varepsilon}\ d\mu \le 2D^2 \left(\avgint_{\widehat U}w\ d\mu\right)^{1+\varepsilon}.
\end{equation*}
\end{proof}

We present now some classical applications of the RHI to weighted norm inequalities for maximal functions. One crucial property of $A_p$ classes is the well known open condition. In the  next lemma we provide a quantitative version of it. 

\begin{lemma}\label{lem:OpenProperty}
For $1<p<\infty$, let $w\in A_p$ . Then, for $\varepsilon=\frac{p-1}{C[\sigma]_{A_\infty}}$ with $C=4D^{10}$ and $\sigma=w^{1-p'}$, we have that $w\in A_{p-\varepsilon}$. Further, $$[w]_{A_{p-\varepsilon}}\le 2^{p-1}D^{4p-2}[w]_{A_p}.$$
\end{lemma}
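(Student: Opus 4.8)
The plan is to deduce the open property directly from the sharp reverse Hölder inequality (Theorem~\ref{thm:SharpRHI}) applied to the dual weight $\sigma=w^{1-p'}$. Recall that $w\in A_p$ is equivalent to $\sigma\in A_{p'}\subset A_\infty$, and in particular $[\sigma]_{A_\infty}<\infty$, so $r(\sigma)=1+\tfrac{1}{4D^{10}[\sigma]_{A_\infty}-1}$ is a legitimate reverse-Hölder exponent for $\sigma$. The first step is to observe that decreasing $p$ to $p-\varepsilon$ has the effect of raising $\sigma$ to a power slightly bigger than $1$: a direct computation shows $(p-\varepsilon)'-1 = \frac{1}{p-\varepsilon-1}$, and one checks that the exponent by which $\sigma$ appears in the $A_{p-\varepsilon}$ functional, namely $\big(w^{1-(p-\varepsilon)'}\big)$, can be rewritten in terms of $\sigma^{s}$ for a suitable $s=s(\varepsilon)>1$ with $s\to 1$ as $\varepsilon\to 0$. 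The key algebraic identity to pin down is exactly which $s$ corresponds to a given $\varepsilon$, and then to choose $\varepsilon$ so that $s\le r(\sigma)$; the natural choice $\varepsilon=\frac{p-1}{4D^{10}[\sigma]_{A_\infty}}$ should make this work, since then $s-1$ is comparable to $\varepsilon/(p-1)$ times a bounded factor.

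Concretely, for a fixed base set $U=x_0+U_{i_0}$ I would estimate the $A_{p-\varepsilon}$ product
\[
\left(\avgint_U w\,d\mu\right)\left(\avgint_U w^{1-(p-\varepsilon)'}\,d\mu\right)^{p-\varepsilon-1}
\]
by leaving the first factor untouched and applying the reverse Hölder inequality of Theorem~\ref{thm:SharpRHI} to $\sigma$ on the second factor: the inner average of $\sigma^{s}$ over $U$ (with $s\le r(\sigma)$, so Jensen/monotonicity lets us pass from $\sigma^{r(\sigma)}$ down to $\sigma^{s}$) is controlled by $\big(2D^2\avgint_{\widehat U}\sigma\,d\mu\big)^{s}$. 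One then raises this to the appropriate power so that $\sigma^{s}$-averages convert into $\sigma$-averages over $\widehat U$, recombines with the $w$-average over $U$, and uses that $U\subset\widehat U$ together with $\mu(\widehat U)\le D^2\mu(U)$ to replace the $w$-average over $U$ by a $w$-average over $\widehat U$ at the cost of a factor $D^2$ (here one must be slightly careful: to get the full $[w]_{A_p}$ on $\widehat U$ one may instead keep the geometry so that both averages live on $\widehat U$, or note that $\widehat U$ itself is engulfed in a base set by Lemma~\ref{lem:engulfing} and bound the $A_p$-type product over $\widehat U$ by a structural multiple of $[w]_{A_p}$). Tracking the constants $2D^2$ from the RHI raised to the relevant powers, together with the $D^2$ losses from $\mu(\widehat U)\le D^2\mu(U)$ applied on the order of $p$ times, should yield the stated bound $[w]_{A_{p-\varepsilon}}\le 2^{p-1}D^{4p-2}[w]_{A_p}$.

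The main obstacle I anticipate is bookkeeping rather than conceptual: one must (i) verify the exponent identity relating $\varepsilon$ to $s$ and confirm $s\le r(\sigma)$ for the chosen $\varepsilon$ — this is where the precise form $r(\sigma)=1+\frac{1}{4D^{10}[\sigma]_{A_\infty}-1}$ and the choice $C=4D^{10}$ must match exactly, with the factor $p-1$ entering through the conversion between $A_{p-\varepsilon}$ and powers of $\sigma$; and (ii) in the absence of a metric, handle the mismatch between the domain $U$ of the outer $w$-average and the domain $\widehat U$ produced by the RHI, so that the final constant is a clean structural multiple of $[w]_{A_p}$ with the advertised power of $D$ and the $2^{p-1}$. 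Neither step is deep, but getting the powers of $D$ and the $2^{p-1}$ to come out precisely requires care, and this is the part of the argument I would write out in full detail.
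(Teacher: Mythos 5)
Your proposal is correct and follows essentially the same route as the paper: you apply Theorem \ref{thm:SharpRHI} to $\sigma$ with $\varepsilon=\frac{p-1}{4D^{10}[\sigma]_{A_\infty}}$, which in fact gives $s=\frac{p-1}{p-\varepsilon-1}=r(\sigma)$ exactly (so the Jensen step is not even needed), and you resolve the $U$ versus $\widehat{U}$ mismatch precisely as the paper does, namely your second option of engulfing $\widehat{U}$ into the base set $U^{**}$ via Lemma \ref{lem:engulfing}. Carrying out the bookkeeping as you outline indeed produces the stated constant $2^{p-1}D^{4p-2}$.
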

\begin{proof}
Let $w\in A_p$. The $A_{p-\varepsilon}$ condition for $w$ takes the form
\begin{equation*}
\sup_{U\in \mathcal{B}}\left (\avgint_U w\ d\mu\right )\left (\avgint_U w^{1-{(p-\varepsilon)'}}\ d\mu\right )^{p-\varepsilon-1}<\infty.
\end{equation*}
Recall that the dual weight of $w$, $\sigma=w^{1-p'}$  is also in $A_\infty$. Therefore it satisfies a RHI with exponent $r(\sigma)$ given by Theorem \ref{thm:SharpRHI}. Choose $\varepsilon$ such that $1-(p-\varepsilon)'=(1-p')r(\sigma)	$, namely $\varepsilon=\frac{p-1}{r(\sigma)'}$ which is equivalent to the condition $r(\sigma)=\frac{p-1}{p-\varepsilon-1}$. Then we obtain
\begin{eqnarray*}
\left (\avgint_U w^{1-(p-\varepsilon)'\ d\mu}\right )^{p-\varepsilon-1} & = & \left (\avgint_U \sigma^{(1-p')r(\sigma)}\ d\mu\right )^{\frac{p-1}{r(\sigma)}}\\
&\le & \left (2D^2 \avgint_{\widehat{U}} \sigma\ d\mu\right)^{p-1},
\end{eqnarray*}
for any $U\in \mathcal{B}$. Now, for $U=x+U_k \in \mathcal{B}$, recall that $U^{**}=x+U_{\theta^2(k)}$ and that $\widehat{U}\subset U^{**}$. Then we have that
\begin{eqnarray*}
\left (\avgint_U w\ d\mu\right )\left (\avgint_U w^{1-{(p-\varepsilon)'}}\ d\mu\right )^{p-\varepsilon-1}
& \le &  C \left (\avgint_{U^{**}} w\ d\mu\right )\left (\avgint_{U^{**}} \sigma \ d\mu\right )^{p-1}
\end{eqnarray*}
with $C=2^{p-1}D^{4p-2}$. We conclude that 
\begin{equation*}
[w]_{A_{p-\varepsilon}}\le 2^{p-1}D^{4p-2} [w]_{A_p}.
\end{equation*}
\end{proof}

In what follows we will need the fact that the maximal function $M$ maps $L^{q,\infty}_w(G)$ to  itself with operator norm bounded by $C[w]^{\frac{1}{q}}_{A_q}$ for some $C>0$. Without presenting any details on weak norms and Lorentz spaces, we include here a quantitative estimate on the size of level sets of the maximal function.

\begin{lemma}\label{lem:weak}
Let $1\le q<\infty$ and let $M$ the maximal function defined in \eqref{eq:Maximal}. Then, for any $f\in L_w^q(G)$, we have that  
\begin{equation}\label{eq:weakM}
\sup_{\lambda>0}\lambda^q w(\{x\in G: Mf(x)>\lambda\})\le D^{2q} [w]_{A_q}\|f\|^q_{L^q_w}.
\end{equation}
\end{lemma}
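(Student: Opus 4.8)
The plan is to run the classical weak-type $(q,q)$ argument, substituting for the (here unavailable) Vitali covering lemma a greedy ``largest index first'' selection in the spirit of Lemma \ref{lem:CZ}. I would begin with two routine reductions. Since $M$ acts on $|f|$ we may assume $f\ge 0$, and with the truncations $f_n:=\min(f,n)\chi_{U_n}$ we have $f_n\uparrow f$, hence $Mf_n\uparrow Mf$ and $\{Mf_n>\lambda\}\uparrow\{Mf>\lambda\}$; by continuity from below of $w\,d\mu$ it suffices to prove \eqref{eq:weakM} for each $f_n$, that is, for $f$ bounded with compact support, and in particular $f\in L^1(\mu)$. Moreover $w\,d\mu$ is a locally finite Borel measure, hence a Radon measure, on the $\sigma$-compact group $G$, so by inner regularity it is enough to estimate $w(K)$ for an arbitrary compact $K\subset\{Mf>\lambda\}$.

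For the covering step I attach to each $x\in K$ a base set $U\ni x$ with $\avgint_U f\,d\mu>\lambda$; since $f\in L^1(\mu)$ every such $U$ satisfies $\mu(U)<\lambda^{-1}\|f\|_{L^1(\mu)}$, and combining this with the engulfing property (Lemma \ref{lem:engulfing-property}) and the doubling of $\mu$ — just as in Remark \ref{rem:jota} — the admissible indices are bounded above, so $U_x=z_x+U_{i_x}$ may be chosen of maximal index. The greedy procedure of Lemma \ref{lem:CZ} (pick $U_{x_1}$ of largest index, discard every $U_x$ meeting it, iterate) then yields a countable pairwise disjoint subfamily $\{V_\ell\}_\ell=\{y_\ell+U_{n_\ell}\}_\ell$ such that every $U_x$ meets some $V_\ell$ with $n_\ell\ge i_x$; Lemma \ref{lem:engulfing-property} then gives $U_x\subset V_\ell^{**}:=y_\ell+U_{\theta^2(n_\ell)}$, hence $K\subset\bigcup_\ell V_\ell^{**}$.

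It remains to carry out the quantitative estimate. On each $V_\ell$ one has $\lambda\mu(V_\ell)<\int_{V_\ell}f\,d\mu$, so with the dual weight $\sigma=w^{1-q'}$ Hölder's inequality gives
\[
\lambda^q\mu(V_\ell)^q<\Big(\int_{V_\ell}f^q w\,d\mu\Big)\sigma(V_\ell)^{q-1}.
\]
Since $V_\ell^{**}$ is again a base set, the $A_q$ condition, together with $\sigma(V_\ell^{**})\ge\sigma(V_\ell)$ and $\mu(V_\ell^{**})\le D^2\mu(V_\ell)$ (doubling applied twice), gives
\[
w(V_\ell^{**})\le[w]_{A_q}\frac{\mu(V_\ell^{**})^q}{\sigma(V_\ell^{**})^{q-1}}\le D^{2q}[w]_{A_q}\frac{\mu(V_\ell)^q}{\sigma(V_\ell)^{q-1}}<\frac{D^{2q}[w]_{A_q}}{\lambda^q}\int_{V_\ell}f^q w\,d\mu .
\]
Summing over the disjoint family, $w(K)\le\sum_\ell w(V_\ell^{**})\le D^{2q}[w]_{A_q}\lambda^{-q}\|f\|_{L^q_w}^q$, and taking the supremum over $K$ and then over $\lambda$ finishes the proof. (For $q=1$ one replaces the Hölder step by $\int_{V_\ell}f\,d\mu\le(\essinf_{V_\ell}w)^{-1}\int_{V_\ell}fw\,d\mu$ and invokes the $A_1$ condition, obtaining the same constant $D^{2}[w]_{A_1}$; when $\mu(G)<\infty$ the small range of $\lambda$ for which $\{Mf>\lambda\}$ may be all of $G$ is handled directly by letting $U_i\uparrow G$ in the $A_q$ inequality.) The delicate point is the selection step: for lack of a metric the usual Vitali/Besicovitch covering machinery is unavailable, and one must verify — using compactness and the finiteness of $\mu$ on compact sets, as in Remark \ref{rem:jota} and the proof of Lemma \ref{lem:CZ} — both that a maximal admissible index exists at each point of $K$ and that the greedily selected disjoint family, after dilation by $\theta^2$, still covers the compact level set $K$.
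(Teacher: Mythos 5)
Your covering-and-estimation core coincides with the paper's: a greedy largest-index-first selection in the spirit of Lemma \ref{lem:CZ}, the engulfing property to pass to the dilates $V_\ell^{**}$, and then H\"older plus the $A_q$ condition on $V_\ell^{**}$ together with two applications of doubling, which yields exactly the paper's constant $D^{2q}[w]_{A_q}$; the preliminary reductions (to $f\ge 0$ bounded with relatively compact support, and to a compact $K\subset\{Mf>\lambda\}$ via inner regularity of $w\,d\mu$, which is legitimate since $G$ is second countable here) are also fine.

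The one genuine soft spot is the step that secures a maximal admissible index. From $\avgint_U f\,d\mu>\lambda$ you get $\mu(U)<\lambda^{-1}\|f\|_{L^1(\mu)}$, and if such a $U=z+U_i$ meets $K\subset U_j$ with $i\ge j$, then engulfing and doubling give $\mu(U_j)\le\mu(z+U_{\theta^2(i)})\le D^2\mu(U)<D^2\lambda^{-1}\|f\|_{L^1(\mu)}$; this caps the admissible indices only if $\mu(U_j)\to\infty$, i.e.\ only when $\mu(G)=\infty$. When $\mu(G)<\infty$ (e.g.\ $G$ compact) the admissible indices can be unbounded as soon as base sets of arbitrarily large index have $f$-average $>\lambda$, and this can happen even though $\{Mf>\lambda\}\neq G$; so the dichotomy in your parenthetical (``the level set is all of $G$'') is not the right trigger. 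The correct split is on boundedness of the admissible indices: if they are unbounded, the same engulfing argument shows that every $U_j$ (for $j$ large) is contained in the $\theta^2$-dilate $W^{**}$ of some admissible $W$, whence $\avgint_{W^{**}}f\,d\mu>\lambda/D^2$, and the one-set inequality $\bigl(\avgint_V f\,d\mu\bigr)^q\,w(V)\le [w]_{A_q}\,\|f\|^q_{L^q_w}$ applied to $V=W^{**}$ gives $\lambda^q w(U_j)\le D^{2q}[w]_{A_q}\|f\|^q_{L^q_w}$; letting $j\to\infty$ bounds $w(G)\ge w(\{Mf>\lambda\})$ with the same constant. This is presumably what you intend by ``letting $U_i\uparrow G$'', so your plan is repairable with the tools you cite, but as written the case analysis does not cover all configurations. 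Note that the paper sidesteps the issue entirely by truncating the operator rather than the function: it works with $M_K$ (only indices $\le K$, so a maximal index exists by definition), intersects the level set with $U_r$, and passes to the limit in $K$ and $r$ at the end, a device that needs no case distinction on $\mu(G)$ and no $L^1$ reduction.
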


\begin{proof}
For any locally integrable function $f$ and any $\lambda>0$ let $\Omega_\lambda$ be  the level set $\Omega_\lambda:=\{x\in G: Mf(x)>\lambda\}$. We also define some sort of \emph{truncated} maximal operator as follows:  for any $K\in \mathbb{Z}$, let $M_K$ the averaging operator given by 
\begin{equation}\label{eq:truncated-maximal}
M_Kf(x)=\sup_{V\in\mathcal{B}_K(x)}\avgint_V |f(z)|  d\mu,
\end{equation}
where the supremum is taken over the subfamily $\mathcal{B}_K$ of $\mathcal{B}$ consisting of all base sets of the form $y+U_i$ with $y\in G$ and $i\le K$ containing the point $x$, i.e.:
\begin{equation}\label{eq:BK(x)}
 \mathcal{B}_K(x):=\{V=y+U_i: x\in V, i\le K\}.
 \end{equation} 
For each $K$ we consider the corresponding level set $\Omega^K_\lambda:=\{x\in G: M_Kf(x)>\lambda\}$. We clearly have that the family $\{\Omega^K_\lambda\}$ is increasing in $K$ and also $\Omega_\lambda=\bigcup_K\Omega^K_\lambda$. We therefore may compute the value of $w(\Omega_\lambda)$ as the limit of $w(\Omega^K_\lambda)$. In addition, we recall that the group $G$ is $\sigma$-compact since $G=\bigcup_{r\in \mathbb{Z}} \overline{U_r}$. We will use again a limiting argument to compute $w(\Omega^K_\lambda)$ as the limit of $w(\Omega_\lambda^K\cap U_r)$ with $r\to +\infty$.

Now for $K\in \mathbb{Z}$ fixed, choose  $r\in\mathbb{Z}$ such that $r\ge K$. A simple Vitali's covering lemma can be applied now to $\Omega_\lambda^K\cap U_r$. We want to select a countable subfamily of disjoint base sets whose dilates cover $\Omega^K_\lambda\cap U_r$. More precisely, suppose that the set $\Omega^K_\lambda\cap U_r$ is nonempty. For each $x\in\Omega^K_\lambda\cap U_r$ there exists a base set $V_x$ of the form $V_x=y_x+U_{i_x}$ such that 
\begin{equation}\label{eq:simple-vitali-1}
\avgint_{V_x}|f(z)|\ d\mu>\lambda.
\end{equation}
Since we have that $i_x\le K$ for all $x\in \Omega^K_\lambda\cap U_r$, there is some $i_{1}=\max\{i_x\}$. We start the recursive selection procedure by picking one of this largest base sets as $V_1=y_1+U_{i_1}$. Now suppose that the first $V_1,V_2,\dots,V_k$ sets have been selected. We pick $V_{k+1}$ verifying that $V_{k+1}=y_{k+1}+U_{i_{k+1}}$ where 
$i_{k+1}=\max\{i_x: y_x+U_{i_x}\cap V_j=\emptyset, j=1,\dots, k\}$.

This process generates a sequence of disjoint base sets $\{V_k\}$. We note that the index sequence $\{i_k\}$ goes to $-\infty$ as $k$ goes to infinity. If not, since it is decreasing, there would be some $i_0=i_k$ for all $k\ge k_0$. Then we have that $ V_k\cap U_r\neq \emptyset$ and $i_k\le K\le r$ and by the engulfing property, $V_k\subset U_r^{**}$ for all $k\ge k_0$. In particular, the set $S=\{y_k: k\ge k_0\}\subset U_r^{**}$ is relatively compact. Then,  considering the set
\begin{equation*}
F=\bigcup_{k\ge k_0} V_k\subset S+U_{i_0}
\end{equation*}
and proceeding as in Lemma \ref{lem:CZ} we get a contradiction.

We claim now that 
\begin{equation*}\label{eq:simple-vitali-2}
\Omega_\lambda^K\cap U_r\subset \bigcup_{k\in \mathbb{N}} V_k^{**}.
\end{equation*}
To verify it, consider some $x\in \Omega^K_\lambda\cap U_r$ and the corresponding $V_x=y_x+U_{i_x}$. Suppose first that $V_x$ intersects some $V_k$. Let $k_0$ the smallest $k\in\mathbb{N}$ such that $V_x\cap V_k\neq\emptyset$. Then we have that $i_x\le i_{k_0}$, since $i_{k_0}$ was selected as the largest index among all the sets $V_x$ disjoint from $V_1,\dots, V_{k_0-1}$ (and by hypothesis $V_x$ is one of them). Then the engulfing property yields 
\begin{equation*}
V_x=y_x+U_{i_x}\subset y_{k_0}+U_{\theta^2(i_{k_0})}=V_{k_0}^{**}.
\end{equation*}
We are left to consider the case when $V_x\cap V_k=\emptyset$ for all $k\in \mathbb{N}$. But in this case, we would have that $i_x\le i_k$ for all $k$ and this is a contradiction since we saw that $i_k\to -\infty$.

Summing up, we find a countable collection of base sets $\{V_k\}_k$ such that 
\begin{equation*}
\avgint_{V_k}f\ d\mu >\lambda \qquad \text{ and } \qquad \Omega^K_\lambda\cap U_r\subset \bigcup_k V_k^{**}.
\end{equation*}

Then we can compute 
\begin{eqnarray*}
\lambda^q w(\Omega^K_\lambda\cap U_r)& \le & \sum_k \lambda^q w(V_k^{**})\\
& \le & \sum_k  w(V_k^{**})\left (\avgint_{V_k} w^{-\frac{1}{q}}w^{\frac{1}{q}}|f|\right )^q\\
& \le & \sum_k  \frac{w(V_k^{**})}{\mu(V_k)^q} \left (\int_{V_k}w^{1-q'}\ d\mu\right )^{q-1}  \left (\int_{V_k} |f|^qw\ d\mu\right )\\
& \le & D^{2q}\sum_k  \frac{w(V_k^{**})}{\mu(V_k^{**})^q} \left (\int_{V_k^{**}}w^{1-q'}\ d\mu\right )^{q-1}  \left (\int_{V_k} |f|^qw\ d\mu\right )\\
& \le & D^{2q}[w]_{A_q}\sum_k  \int_{V_k} |f|^qw\ d\mu\\
& \le & D^{2q}[w]_{A_q}\|f\|^q_{L^q_w}.
\end{eqnarray*}
From this estimate we conclude that
\begin{equation*}
\lambda^q w(\Omega_\lambda)\le D^{2q}[w]_{A_q}\|f\|^q_{L^q_w}
\end{equation*}
for any $\lambda >0$.

\end{proof}

Now we are able to present the proof of the sharp version of Buckley's Theorem for the maximal function $M$ on $L^p(G)$, $p>1$.

\begin{proof}[Proof of Theorem \ref{thm:CorSharpBuckley}:]

The idea is to use a sort of interpolation type argument, exploiting the sublinearity of the maximal operator $M$ and the weak  type estimate for $M$ from Lemma \ref{lem:weak}. For any $f\in L^p_w(G)$ and any $t>0$, define the truncation $f_t:=f\chi_{\{|f|>t\}}$. Then, an easy computation of the averages defining $M$ gives that 
\begin{equation*}
\{x\in G: Mf(x)>2t\}\subset \{x\in G: Mf_t(x)>t\}.
\end{equation*}

Now we compute the $L^p_w$ norm as follows
\begin{eqnarray*}
\|Mf\|^p_{L_w^p(G)} &=& \int_0^\infty pt^{p-1}w(\{x\in G: Mf(x)>t\}) dt\\
&=&2^p \int_0^\infty pt^{p-1}w(\{x\in G: Mf(x)>2t\}) dt\\
&\le &2^p \int_0^\infty pt^{p-1}w(\{x\in G: Mf_t(x)>t\}) dt.
\end{eqnarray*}

We recall the open property for Muckenhaupt weights: any $w\in A_p$ also belongs to $A_{p-\varepsilon}$ for some explicit $\varepsilon>0$ (see Lemma \ref{lem:OpenProperty}). 
Using the estimate of Lemma \ref{lem:weak} for $q=p-\varepsilon$, we obtain 

\begin{eqnarray}
\|Mf\|^p_{L_w^p(G)} &\leq&2^p p D^{2(p-\varepsilon)}[w]_{A_{p-\varepsilon}}\int_0^\infty t^{\varepsilon-1}\int_G f^{p-\varepsilon}_t(x) w(x)\ d\mu dt \nonumber \\
& = &  \frac{2^p p D^{2(p-\varepsilon)}[w]_{A_{p-\varepsilon}}}{\varepsilon}\int_G |f(x)|^p w\ d\mu \nonumber\\
& \leq & \frac{p2^{2p-1}D^{6p-2}[w]_{A_{p}}}{\varepsilon}\|f\|_{L^p_w(G)}^p, \label{eq:eqfinal}
\end{eqnarray}
where in the last inequality we have used Lemma \ref{lem:OpenProperty}.
Noticing that in Lemma  \ref{lem:OpenProperty}, $\varepsilon=\frac{p-1}{4D^{10}[\sigma]_{A_\infty}}$ we finally conclude from \eqref{eq:eqfinal} that 
\begin{equation*}
\|Mf\|_{L_w^p(G)} \le C \left ([w]_{A_p}[\sigma]_{A_\infty}\right )^\frac{1}{p}\|f\|_{L^p_w(G)}
\end{equation*}
and the proof of \eqref{eq:improvedBuckley} is complete. 

Finally, since $[\sigma]_{A_\infty}\leq[\sigma]_{A_{p'}}=[w]_{A_p}^{p'-1}$, \eqref{eq:Buckley2} follows from \eqref{eq:improvedBuckley}. 

\end{proof}

\bibliographystyle{amsalpha}

\providecommand{\bysame}{\leavevmode\hbox to3em{\hrulefill}\thinspace}
\providecommand{\MR}{\relax\ifhmode\unskip\space\fi MR }
\providecommand{\MRhref}[2]{%
  \href{http://www.ams.org/mathscinet-getitem?mr=#1}{#2}
}
\providecommand{\href}[2]{#2}

\end{document}